\newcommand{\reff}[1]{{\rm (\ref{#1})}}
\newcommand{\nabh}{\nabla_{\! h}}
\newcommand{\calC}{\mathcal C}
\newcommand{\hf}{\nicefrac{1}{2}}
\newcommand{\nrm}[1]{\left\| #1 \right\|}
\newcommand{\ciptwo}[2]{\left\langle #1 , #2 \right\rangle}
\newcommand\dt {{\Delta t}}
\newcommand{\eipx}[2]{\left[ #1 , #2 \right]_{\rm x}}
\newcommand{\eipy}[2]{\left[ #1 , #2 \right]_{\rm y}}
\newcommand{\eipz}[2]{\left[ #1 , #2 \right]_{\rm z}}
\newcommand{\eipvec}[2]{\left[ #1 , #2 \right]}
\newtheorem{theorem}{Theorem}[section]
\newtheorem{lemma}[theorem]{Lemma}
\newtheorem{proposition}[theorem]{Proposition}
\newtheorem{corollary}[theorem]{Corollary}
\newtheorem{remark}[theorem]{Remark}
\newenvironment{proof}[1][Proof]{\begin{trivlist}
\item[\hskip \labelsep {\bfseries #1}]}{\end{trivlist}}
\newcommand{\qed}{\nobreak \ifvmode \relax \else
      \ifdim\lastskip<1.5em \hskip-\lastskip
      \hskip1.5em plus0em minus0.5em \fi \nobreak
      \vrule height0.75em width0.5em depth0.25em\fi}
\def\XXint#1#2#3{{\setbox0=\hbox{$#1{#2#3}{\int}$}
\vcenter{\hbox{$#2#3$}}\kern-.51\wd0}}
\newcommand{\td}{\tilde}
\begin{document}

\graphicspath{{Figures/}}

\title{Convergence Analysis of Structure-Preserving Numerical Methods Based on Slotboom Transformation for the Poisson--Nernst--Planck Equations}

\author{Jie Ding\thanks{
School of Science, Jiangnan University,  Wuxi, Jiangsu, 214122, China (jding@jiangnan.edu.cn).}
\and
Cheng Wang\thanks{Department of Mathematics, University of Massachusetts, North Dartmouth, MA  02747, USA (cwang1@umassd.edu).}
\and	
Shenggao Zhou\thanks{School of Mathematical Sciences, MOE-LSC, and CMA-Shanghai, Shanghai Jiao Tong University, Shanghai, 200240, China  (Corresponding Author: sgzhou@sjtu.edu.cn).}
}
\maketitle

\begin{abstract}
The analysis of structure-preserving numerical methods for the Poisson--Nernst--Planck (PNP) system has attracted growing interests in recent years. A class of numerical algorithms have been developed based on the Slotboom reformulation, and the mass conservation, ionic concentration positivity, free-energy dissipation have been proved at a discrete level. Nonetheless, a rigorous convergence analysis for these Slotboom reformulation-based, structure-preserving schemes has been an open problem for a long time.  In this work, we provide an optimal rate convergence analysis and error estimate for finite difference schemes based on the Slotboom reformulation.  Different options of mobility average at the staggered mesh points are considered in the finite-difference spatial discretization, such as the harmonic mean, geometric mean, arithmetic mean, and entropic mean. A semi-implicit temporal discretization is applied, which in turn results in a non-constant coefficient, positive-definite linear system at each time step.  A higher order asymptotic expansion is applied in the consistency analysis, and such a higher order consistency estimate is necessary to control the discrete maximum norm of the concentration variables. In convergence estimate, the harmonic mean for the mobility average, which turns out to bring lots of convenience in the theoretical analysis, is taken for simplicity, while other options of mobility average would also lead to the desired error estimate, with more technical details involved.  As a result, an optimal rate convergence analysis on concentrations, electric potential, and ionic fluxes is derived, which is the first such results for the structure-preserving numerical schemes based on the Slotboom reformulation. It is remarked that the convergence analysis leads to a theoretical justification of the conditional energy dissipation analysis, which relies on the maximum norm bounds of the concentration and the gradient of the electric potential. Some numerical results are also presented to demonstrate the accuracy and structure-preserving performance of the associated schemes.

\end{abstract}
{\bf Keywords:} Poisson--Nernst--Planck equations, Slotboom reformulation, mobility average, convergence analysis and error estimate,  higher order consistency estimate

\section{Introduction}
The Poisson--Nernst--Planck (PNP) equations are widely used to describe charge transport in many applications, e.g., transmembrane ion channels~\cite{IonChanel_HandbookCRC15}, electrochemical devices~\cite{BTA:PRE:04}, and semiconductors~\cite{PMarkowich_Book}. Such a system of equations can be formulated in a dimensionless form as 
\begin{equation}\label{PNP}
\left\{
\begin{aligned}
&\partial_t c^l= \nabla\cdot(\nabla c^l+q^lc^l\nabla\psi) , \quad  l=1, \cdots, M, \qquad \mbox{in }\Omega, \\
&-\kappa \Delta \psi =  \sum_{l=1} ^M q^l c^l + \rho^f  ,  \qquad \mbox{in }\Omega, 
\end{aligned}
\right.
\end{equation}
where $c^l=c^l({\bf x}, t)$ and $q^l$ are the ionic concentration and valence for the $l$-th species, $\kappa$ stands for the coefficient arising from nondimensionalization, $\psi=\psi({\bf x}, t)$ is the electric potential, and $\rho^f=\rho^f({\bf x})$ is the distribution of fixed charge density. For simplicity of presentation, we consider a cubic $\Omega$ with periodic boundary conditions.  An extension to the model with homogeneous Neumann boundary conditions is straightforward. The initial condition for ionic concentrations is given by $c^l({\bf x}, 0)=c^l_{\rm in}({\bf x})$, where $c^l_{\rm in}$ is the initial data. With periodic boundary conditions, the total mass of ions is conservative in $\Omega$ for each species.

The evolution of ionic concentrations and electric potential described by the PNP equations can be regarded as a gradient flow of an electrostatic free energy
\begin{equation}\label{F}
 F = \sum_{l=1}^M  \int_{\Omega} \left[c^l\log c^l+\frac{1}{2} (q^lc^l+\rho^f)\psi\right]dV.
\end{equation}
In fact, one can derive the free-energy dissipation law
\begin{equation}\label{dF/dtLaw}
\begin{aligned}
\frac{dF}{dt}=\sum_{l=1}^M \int_{\Omega} \frac{\partial c^l}{\partial t} \frac{\delta F}{\delta c^l} dV = -\sum_{l=1}^M \int_{\Omega} \frac{1}{c^l}\big|\nabla c^l+ q^lc^l\nabla\psi\big|^2dV\leq 0. 
\end{aligned}
\end{equation}


Much effort has been devoted to the development of numerical methods for the PNP equations, such as finite difference, finite volume, and finite element methods~\cite{ProhlSchmuck09, LHMZ10, ZCW11, AMEKLL14, CCAO14, LW14, Chatard2014, Gibou_JCP14, LiuShu_SCM16, SunSunZhengLin16,  MXL16, GaoHe_JSC17, LW17, DSWZhou_CICP18, SWZ_CMS18, QianWangZhou_JCP2019, DingWangZhou_NMTMA19, HuHuang_NM20, ShenXu_NM21, LiuMaimaiti2021}. Many existing works focus on the development of numerical algorithms that are able to preserve desirable physical structure properties, including mass conservation, free-energy dissipation, and positivity of ionic concentrations at discrete level. For instance, a second-order accurate, mass conservative, and energy dissipative finite difference method was proposed for the 1-D PNP equations~\cite{LW14}.  A finite element scheme that ensures positivity of ionic concentration via a variable transformation was developed for the PNP-type equations~\cite{MXL16}. A discontinuous Galerkin method was also developed~\cite{LW17}, in which the positivity of the numerical solution was not theoretically proved, while such a property was enforced by an accuracy-preserving limiter. An implicit finite difference method that guarantees numerical positivity and energy dissipation was developed in the work~\cite{HuHuang_NM20}. A set of numerical schemes that unconditionally ensure positivity, unique solvability, and energy dissipation was proposed in~\cite{ShenXu_NM21}. Recently, a mass conservative, positivity-preserving, and energy stable finite difference scheme, based on a gradient-flow formulation, was proposed and analyzed, and an optimal rate convergence analysis has also been established~\cite{LiuC2021a}. Keeping the structure-preserving properties, it has been extended to second-order accuracy, both in space and time, with corresponding convergence analysis in a more recent work~\cite{LiuWang_Sub21}.


The above-mentioned progress on structure-preserving numerical methods can be sorted into two categories. One is based on the observation that the dynamics described by the PNP equations is the gradient flow of the free energy~\reff{F}~\cite{QianWangZhou_JCP21, ShenXu_NM21, LiuC2021a}. Implicit discretization based on the gradient-flow structure naturally respects energy dissipation due to the convexity of the free energy. The existence of positive concentration solution is enforced by using the singularity of the logarithmic function at zero. Convergence analysis and error estimate have been rigorously established for first-order and second-order temporal discretization schemes~\cite{LiuC2021a, LiuWang_Sub21}. The other category is based on the Slotboom transformation~\cite{LW14, SWZ_CMS18, HuHuang_NM20, DingWangZhou_JCP2019, DingWangZhou_JCP2020, LiuMaimaiti2021}, which converts the Nernst--Planck (NP) equations into 
\begin{equation} \label{NPe} 
\begin{aligned} 
  & 
\partial_t c^l=\nabla\cdot(e^{-S^l}\nabla g^l),  \quad S^l= q^l \psi,  \quad 
\quad  l=1, \cdots, M.
\end{aligned}   
\end{equation}
Here $g^l=c^le^{S^l}$ is a generalized Slotboom variable~\cite{Slotboom1973}. The advantage of such a reformulation is associated with the combination of the diffusion and convection into a self-adjoint operator, which greatly facilitates the design of discretization schemes that are able to preserve the discrete maximum principle. Conditional free-energy dissipation can be achieved if the electric potential is explicitly treated, and unconditional dissipation can be proved with an implicit treatment of the electric potential. However, rigorous convergence analysis as well as error estimate is still an open problem for structure-preserving numerical methods based on the Slotboom transformation. 

In this work, we provide a theoretical proof of convergence analysis and error estimate for structure-preserving numerical schemes based on the Slotboom transformation.  After the transformation, the PNP equations are spatially discretized with central differencing approximations, and a semi-implicit treatment is considered for temporal discretization. This in turn results in a non-constant coefficient, positive-definite linear system at each time step. Moreover, a discrete average of mobility functions is needed at the staggered mesh points, and four different options are considered: the harmonic mean, geometric mean, arithmetic mean, and entropic mean. The mass conservative, positivity-preserving, and energy dissipative properties of the finite difference scheme with various mobility averages have already been established in a few recent works~\cite{LW14, DingWangZhou_JCP2019, DingWangZhou_JCP2020, LiuMaimaiti2021}.  In particular, the energy dissipation analysis relies on the maximum norm bounds of the concentration, as well as the gradient of the electric potential, while these bounds have to be established through an optimal rate convergence analysis. We perform the error estimate by employing a higher order asymptotic expansion in the consistency analysis, which is necessary to control the discrete maximum norm of the concentration variables. This approach has been reported for a wide class of nonlinear PDEs; see the related works~\cite{E95, E02, STWW03, STWW07, WL00, WL02, WLJ04, baskaran13b, guan14a, LiX2021}. In the convergence estimate, the harmonic mean for the mobility average, which turns out to bring lots of convenience in the theoretical analysis, is shown as an example, while other options of mobility average would also lead to the desired error estimate, with more technical details involved. As a result, an optimal rate convergence analysis is derived for ionic concentrations, electric potential, and fluxes, which is the first such result for structure-preserving numerical methods for the PNP equations that are based on the Slotboom transformation. 

The rest of the paper is organized as follows. In Section~\ref{sec:numerical scheme}, we present the fully discrete finite difference numerical scheme based on the Slotboom reformulation. The mass conservative, positivity-preserving, and energy dissipative properties of the numerical schemes are recalled in Section~\ref{sec:Properties}. Subsequently, the optimal rate convergence analysis is presented in Section~\ref{sec:convergence}.  Some numerical results are provided in Section~\ref{sec:numerical results}. Finally, conclusions and discussions are given in Section~\ref{sec:conclusion}.

\section{Numerical schemes} \label{sec:numerical scheme}
\subsection{Discretization and notations}\label{ss:Discretization}
The PNP system~\reff{PNP} is discretized based on the Slotboom transformation~\reff{NPe}. For simplicity, a rectangular computational domain $\Omega= (a, b)^3$ with periodic boundary conditions is considered. Let $N\in \mathbb{N}^*$ be the number of grid points along each dimension, and $h=(b-a)/N$ be the uniform grid spacing. The computational domain is covered by the grid points
\[
\left\{x_i, y_j, z_k\right\}= \left\{a+(i-\frac{1}{2})h, a+(j-\frac{1}{2})h, a+(k-\frac{1}{2})h\right\} , 
\]
for $i,j,k= 1, \cdots, N$.
Denote by $c^{l}_{i,j,k}$, $g^{l}_{i,j,k}$, and $\psi_{i,j,k}$ the discrete approximations of $c^l(x_i,y_j,z_k, \cdot)$, $g^l(x_i,y_j,z_k, \cdot)$, and $\psi(x_i,y_j,z_k, \cdot)$, respectively.

We recall the following standard discrete operators and notations for finite difference discretization~\cite{wise09a, wang11a}. The following space of periodic grid functions is introduced:
\begin{equation}
\begin{aligned}
&\mathcal{C}_{\rm per}:=\big\{u\big|u_{i,j,k}=u_{i+\alpha N,j+\beta N,k+\gamma N}, \forall i,j,k, \alpha,\beta,\gamma\in \mathbb{Z} \big\},\\
&{\mathcal E}^{\rm x}_{\rm per}:=\left\{\nu  \middle| \ \nu_{i+\frac12,j,k}= \nu_{i+\frac12+\alpha N,j+\beta N, k+\gamma N}, \ \forall \, i,j,k,\alpha,\beta,\gamma\in \mathbb{Z}\right\},
\end{aligned}
\end{equation}
and 
\[
\mathring{\mathcal{C}}_{\rm per}:=\bigg\{u\in \mathcal{C}_{per}\bigg| \overline{u}=0 \bigg\}, \quad 
 \overline{u}=\frac{h^3}{|\Omega|}\sum_{i,j,k=1}^N u_{i,j,k} . 
\]
Analogously, we define the spaces ${\mathcal E}^{\rm y}_{\rm per}$ and ${\mathcal E}^{\rm z}_{\rm per}$, and denote $\vec{\mathcal{E}}_{\rm per} := {\mathcal E}^{\rm x}_{\rm per}\times {\mathcal E}^{\rm y}_{\rm per}\times {\mathcal E}^{\rm z}_{\rm per}$. We also introduce the following average and difference operators in $x$-direction:  
	\begin{eqnarray*}
&& A_x f_{i+\hf,j,k} := \frac{1}{2}\left(f_{i+1,j,k} + f_{i,j,k} \right), \quad D_x f_{i+\hf,j,k} := \frac{1}{h}\left(f_{i+1,j,k} - f_{i,j,k} \right), \\
&& a_x f_{i, j, k} := \frac{1}{2}\left(f_{i+\hf, j, k} + f_{i-\hf, j, k} \right),	 \quad d_x f_{i, j, k} := \frac{1}{h}\left(f_{i+\hf, j, k} - f_{i-\hf, j, k} \right).
	\end{eqnarray*}
Average and difference operators in $y$ and $z$ directions, denoted as $A_y$, $A_z$, $D_y$, $D_z$, $a_y$, $a_z$, $d_y$, and $d_z$, can be analogously defined. The discrete gradient and discrete divergence are given by
	\[
\nabh f_{i,j,k} =\left( D_xf_{i+\hf, j, k},  D_yf_{i, j+\hf, k},D_zf_{i, j, k+\hf}\right) , 
  \quad 
\nabh\cdot\vec{f}_{i,j,k} = d_x f^x_{i,j,k}	+ d_y f^y_{i,j,k} + d_z f^z_{i,j,k},
	\]
where $\vec{f} = (f^x,f^y,f^z)\in \vec{\mathcal{E}}_{\rm per}$. The standard discrete Laplacian becomes  
	\begin{align*}
\Delta_h f_{i,j,k} := & \nabla_{h}\cdot\left(\nabla_{h}f\right)_{i,j,k} =  d_x(D_x f)_{i,j,k} + d_y(D_y f)_{i,j,k}+d_z(D_z f)_{i,j,k}.
	\end{align*}
For a periodic scalar function $\mathcal{D}$ that is defined at face center points and $\vec{f}\in\vec{\mathcal{E}}_{\rm per}$, then $\mathcal{D}\vec{f}\in\vec{\mathcal{E}}_{\rm per}$, in the sense of point-wise multiplication, and we denote
	\[
\nabla_h\cdot \big(\mathcal{D} \vec{f} \big)_{i,j,k} = d_x\left(\mathcal{D}f^x\right)_{i,j,k}  + d_y\left(\mathcal{D}f^y\right)_{i,j,k} + d_z\left(\mathcal{D}f^z\right)_{i,j,k} .
	\]
If $f\in \mathcal{C}_{\rm per}$, then $\nabla_h \cdot\left(\mathcal{D} \nabla_h  \cdot \right):\mathcal{C}_{\rm per} \rightarrow \mathcal{C}_{\rm per}$ is defined as 
	\[
\nabla_h\cdot \big(\mathcal{D} \nabla_h f \big)_{i,j,k} = d_x\left(\mathcal{D}D_xf\right)_{i,j,k}  + d_y\left(\mathcal{D} D_yf\right)_{i,j,k} + d_z\left(\mathcal{D}D_zf\right)_{i,j,k} .
	\]
	
In addition, we introduce the following grid inner products
	\begin{equation*}
	\begin{aligned}
\ciptwo{f}{\xi}  &:= h^3\sum_{i,j,k=1}^N  f_{i,j,k}\, \xi_{i,j,k},\quad f,\, \xi\in {\mathcal C}_{\rm per},\quad
&\eipx{f}{\xi} := \langle a_x(f\xi) , 1 \rangle ,\quad f,\, \xi\in{\mathcal E}^{\rm x}_{\rm per},
\\
\eipy{f}{\xi} &:= \langle a_y(f\xi) , 1 \rangle ,\quad f,\, \xi\in{\mathcal E}^{\rm y}_{\rm per},\quad
&\eipz{f}{\xi} := \langle a_z(f\xi) , 1 \rangle ,\quad f,\, \xi\in{\mathcal E}^{\rm z}_{\rm per}.
\\
[ \vec{f}_1 , \vec{f}_2 ] &: = \eipx{f_1^x}{f_2^x}	+ \eipy{f_1^y}{f_2^y} + \eipz{f_1^z}{f_2^z}, \quad &\vec{f}_i = (f_i^x,f_i^y,f_i^z) \in \vec{\mathcal{E}}_{\rm per}, \ i = 1,2.
	\end{aligned}
	\end{equation*}	
Then, the following norms can be defined for grid functions. If $f\in {\mathcal C}_{\rm per}$, then $\nrm{f}_2^2 := \langle f , f \rangle$, $\nrm{f}_p^p := \ciptwo{|f|^p}{1}$, for $1\le p< \infty$, and $\nrm{f}_\infty := \max_{1\le i,j,k\le N}\left|f_{i,j,k}\right|$. The gradient norms are given by
	\begin{eqnarray*} 
\nrm{ \nabla_h f}_2^2 &: =& \eipvec{\nabh f }{ \nabh f } = \eipx{D_xf}{D_xf} + \eipy{D_yf}{D_yf} +\eipz{D_zf}{D_zf},  \quad \forall \,  f \in{\mathcal C}_{\rm per} ,   
	\\
\nrm{\nabla_h f}_p^p &:=&  \eipx{|D_xf|^p}{1} + \eipy{|D_yf|^p}{1} +\eipz{|D_zf|^p}{1}   , \quad \forall \, f \in{\mathcal C}_{\rm per}, \quad  1\le p<\infty . 
	\end{eqnarray*}
The higher-order norms can be similarly defined:
	\[
\nrm{f}_{H_h^1}^2 : =  \nrm{f}_2^2+ \nrm{ \nabla_h f}_2^2, \quad \nrm{f}_{H_h^2}^2 : =  \nrm{f}_{H_h^1}^2  + \nrm{ \Delta_h f}_2^2  , \quad \forall \, f \in{\mathcal C}_{\rm per}.
	\]

In addition, we recall the following inequalities that shall be used in the convergence analysis. 
\begin{lemma} \cite{guo16, guo2021} \label{Lem1}
For any $ f \in \mathring{\calC}_{per}$, we have 
\begin{align}
\| f \|_2 +  \| \nabla_h f \|_2 \le C \| \Delta_h f \|_2 , \label{lem1:1}
\end{align}
where $C$ is a positive constant independent of $h$. In addition, the following estimates are available: 
\begin{align} 
  & 
   \| f \|_\infty \le C ( \| f \|_2 + \| \Delta_h f \|_2 ) ,  \quad \forall f \in  {\calC}_{per} , 
   \label{lem1:2}   
\\
  & 
  \| \nabla_h f \|_\infty \le C h^{-1} \| f \|_\infty 
  \le C h^{-1} \| \Delta_h f \|_2  ,  \quad \forall f \in  \mathring{\calC}_{per} .  
   \label{lem1:3}   
\end{align}    
\end{lemma}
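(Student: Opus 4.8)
The statement to prove is Lemma~\ref{Lem1}, a collection of discrete functional inequalities for periodic grid functions. Let me sketch proofs of the three estimates.

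\textbf{Plan for \eqref{lem1:1}.} The natural approach is to exploit the discrete Fourier transform on $\mathcal{C}_{\rm per}$. Since we work with periodic boundary conditions on a uniform grid, every $f \in \mathcal{C}_{\rm per}$ admits an expansion in discrete Fourier modes $e^{2\pi i(\ell x + m y + n z)/(b-a)}$, and $\Delta_h$ acts diagonally on these modes with eigenvalue $-\mu_{\ell,m,n}$ where $\mu_{\ell,m,n} = \frac{4}{h^2}\big(\sin^2(\pi \ell h/(b-a)) + \sin^2(\pi m h/(b-a)) + \sin^2(\pi n h/(b-a))\big) \ge 0$, vanishing only for the zero mode. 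Similarly $\nabla_h$ has symbol of modulus comparable to $\sqrt{\mu_{\ell,m,n}}$. Since $f \in \mathring{\mathcal C}_{\rm per}$, the zero mode is absent, so $\mu_{\ell,m,n}$ is bounded below by a positive constant $\mu_{\min} = c(b-a)^{-2}$ (coming from the lowest nonzero mode $|\ell|=1$, using $\sin^2(\pi h/(b-a)) \ge c h^2/(b-a)^2$ when $h \le (b-a)/2$, say) — crucially independent of $h$. Then, by Parseval, $\|f\|_2^2 + \|\nabla_h f\|_2^2 = \sum_{\text{modes}} (1 + |\text{symbol of }\nabla_h|^2)|\hat f|^2 \le \sum (1+C\mu_{\ell,m,n})|\hat f|^2 \le C(1+\mu_{\min}^{-1})\sum \mu_{\ell,m,n}^2 |\hat f|^2 \cdot \mu_{\min}^{-1} $; one checks that $1 + C\mu \le C' \mu^2$ for all $\mu \ge \mu_{\min} > 0$, which gives $\|f\|_2^2 + \|\nabla_h f\|_2^2 \le C \sum \mu_{\ell,m,n}^2 |\hat f|^2 = C\|\Delta_h f\|_2^2$. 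This is essentially a discrete Poincar\'e inequality combined with elliptic regularity, and it is exactly the content cited from \cite{guo16, guo2021}.

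\textbf{Plan for \eqref{lem1:2} and \eqref{lem1:3}.} For \eqref{lem1:2}, the natural route is again Fourier/Parseval: $\|f\|_\infty \le \sum_{\text{modes}} |\hat f_{\ell,m,n}|$ (triangle inequality, since each Fourier mode has modulus $1$), then Cauchy--Schwarz gives $\|f\|_\infty \le \big(\sum (1+\mu_{\ell,m,n})^{-2}\big)^{1/2}\big(\sum (1+\mu_{\ell,m,n})^2 |\hat f|^2\big)^{1/2}$. In three dimensions the sum $\sum_{\ell,m,n}(1+\mu_{\ell,m,n})^{-2}$ converges uniformly in $h$ (the number of modes is $N^3 \sim h^{-3}$, but the tail decays like $|\ell|^{-4}$ in each direction after accounting for aliasing; one must be slightly careful because $\mu_{\ell,m,n}$ saturates at $O(h^{-2})$ for high modes, but the count of such modes times $(h^{-2})^{-2} = h^4$ against $h^{-3}$ modes still gives a bounded contribution — in fact the standard argument shows $\sum (1+\mu)^{-2}$ is bounded by a dimensional constant). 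The second factor is $\le C(\|f\|_2 + \|\Delta_h f\|_2)$ since $(1+\mu)^2 \le 2(1+\mu^2)$. For \eqref{lem1:3}, the first inequality $\|\nabla_h f\|_\infty \le C h^{-1}\|f\|_\infty$ is the trivial inverse estimate: each component $D_x f_{i+1/2,j,k} = h^{-1}(f_{i+1,j,k}-f_{i,j,k})$ so $|D_x f| \le 2h^{-1}\|f\|_\infty$ pointwise, and similarly for $D_y, D_z$. The second inequality then follows by chaining: apply \eqref{lem1:2} but with the zero-mean hypothesis we may instead use \eqref{lem1:1} to get $\|f\|_\infty \le C(\|f\|_2 + \|\Delta_h f\|_2) \le C\|\Delta_h f\|_2$ — wait, \eqref{lem1:2} gives $\|f\|_\infty \le C(\|f\|_2 + \|\Delta_h f\|_2)$, and for $f \in \mathring{\mathcal C}_{\rm per}$ we bound $\|f\|_2 \le C\|\Delta_h f\|_2$ by \eqref{lem1:1}, so $\|f\|_\infty \le C\|\Delta_h f\|_2$, hence $\|\nabla_h f\|_\infty \le Ch^{-1}\|\Delta_h f\|_2$.

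\textbf{Main obstacle.} The only genuinely delicate point is verifying that $\sum_{\ell,m,n}(1+\mu_{\ell,m,n})^{-2}$ is bounded independently of $N$ (equivalently $h$) in \eqref{lem1:2}: one must handle the aliasing/saturation of the discrete Laplacian symbol at high frequencies and confirm that the three-dimensional mode sum still converges uniformly. This is standard but requires splitting the sum into low modes (where $\mu \sim |k|^2/(b-a)^2$) and high modes (where $\mu \gtrsim h^{-2}$), and checking the count in each regime; it is precisely the discrete analogue of the Sobolev embedding $H^2(\mathbb{T}^3) \hookrightarrow L^\infty(\mathbb{T}^3)$ with a constant uniform in the mesh. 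Everything else reduces to Parseval's identity, the spectral gap for zero-mean functions, and an elementary pointwise inverse estimate. Since the lemma is quoted verbatim from \cite{guo16, guo2021}, the write-up can simply cite those references and sketch the Fourier argument.
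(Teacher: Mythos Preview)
The paper does not give its own proof of this lemma: it is stated with citation to \cite{guo16, guo2021} and used as a black box. So there is no ``paper's proof'' to compare against; your task here is really just to confirm the statement and cite it.

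That said, your Fourier-analytic sketch is correct and is essentially how the cited references establish these estimates. The argument for \eqref{lem1:1} via the spectral gap $\mu_{\min} \gtrsim (b-a)^{-2}$ of $-\Delta_h$ on zero-mean functions is standard, and can be streamlined: once you have $\|f\|_2^2 \le \mu_{\min}^{-2}\|\Delta_h f\|_2^2$ by Parseval, the gradient bound follows from $\|\nabla_h f\|_2^2 = \langle -\Delta_h f, f\rangle \le \|\Delta_h f\|_2\|f\|_2$, avoiding the slightly awkward ``$1 + C\mu \le C'\mu^2$'' step. For \eqref{lem1:2} you have correctly identified the only real issue --- uniform summability of $(1+\mu_{\ell,m,n})^{-2}$ over the $N^3$ discrete modes --- and your high/low splitting (low modes behave like the continuous $H^2(\mathbb{T}^3)\hookrightarrow L^\infty$ sum; high modes contribute $O(h^{-3})\cdot O(h^4) = O(h)$) is the right way to close it. The chaining for \eqref{lem1:3} is fine.

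In a write-up matching the paper's style, you would simply state the lemma with the citation and omit the proof.
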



\subsection{Numerical schemes}
Based on the Slotboom reformulation~\eqref{NPe}, spatial discretization at $(x_i,y_j,z_k)$ leads to a semi-discrete scheme~\cite{LW14, DingWangZhou_JCP2019, LiuMaimaiti2021}
\begin{equation}\label{NPEs}
\begin{aligned}
\frac{d}{dt}c^l_{i,j,k}=&\nabla_h\cdot \big(e^{-S^l} \hat{g}^l \big)_{i,j,k},
\end{aligned}
\end{equation}
where  $e^{-S^l}  \hat{g}^l = (e^{-S^l} \hat{g}^l_x, e^{-S^l} \hat{g}^l_y, e^{-S^l} \hat{g}^l_z) \in \vec{\mathcal{E}}_{\rm per}$ with
\[
\hat{g}^l_{x,i+\frac{1}{2},j,k} = D_x \left(c^l e^{S^l} \right)_{i+\frac{1}{2},j,k}, 
\hat{g}^l_{y,i,j+\frac{1}{2},k} = D_y \left(c^l e^{S^l} \right)_{i,j+\frac{1}{2},k}, 
\mbox{~and ~} \hat{g}^l_{x,i,j,k+\frac{1}{2}} = D_z \left(c^l e^{S^l} \right)_{i,j,k+\frac{1}{2}}.
\]
In terms of numerical approximation of the mobility function $e^{-S^l}$ on the half-grid points, there are several options
\begin{equation}
\begin{aligned}
&\mbox{Harmonic mean: } 
&e^{-S^l_{i+{\frac{1}{2}},j,k}}&=\frac{2e^{-S^l_{i+1,j,k}}e^{-S^l_{i,j,k}}}{e^{-S^l_{i+1,j,k}}+e^{-S^l_{i,j,k}}} = \Big( \frac{e^{ S^l_{i+1,j,k}}+e^{ S^l_{i,j,k}}}{2}  \Big)^{-1} ,\\
&\mbox{Geometric mean: } 
&e^{-S^l_{i+{\frac{1}{2}},j,k}}&=e^{-\frac{S^l_{i+1,j,k}+S^l_{i,j,k}}{2}},\\
&\mbox{Arithmetic mean: } 
&e^{-S^l_{i+{\frac{1}{2}},j,k}}&=\frac{e^{-S^l_{i+1,j,k}}+e^{-S^l_{i,j,k}}}{2},\\
&\mbox{Entropic mean: }
&e^{-S^l_{i+{\frac{1}{2}},j,k}}&=\frac{S^l_{i+1,j,k}-S^l_{i,j,k}}{e^{S^l_{i+1,j,k}}-e^{S^l_{i,j,k}}}.\\
\end{aligned}
\label{mean-1} 
\end{equation}
The approximation of $e^{-S^l_{i,j+\frac{1}{2},k}}$ and $e^{-S^l_{i,j,k+\frac{1}{2}}}$ can be analogously defined. The periodic boundary conditions are imposed at a discrete level: 
\begin{equation}\label{PBC2}
c^l_{\frac{1}{2},j,k}=c^l_{N+\frac{1}{2},j,k},~~c^l_{i,\frac{1}{2},k}=c^l_{i,N+\frac{1}{2},k},~~c^l_{i,j,\frac{1}{2}}=c^l_{i,j,N+\frac{1}{2}}~~\mbox{for}~i,j,k=1,\cdots,N.\\
\end{equation}

\begin{remark}
The spatial discretization with the entropic mean of mobility functions on half-grid points can lead to the well-known Scharfetter--Gummel scheme, which is reduced to an upwind scheme when the convection is large. 
\end{remark}

Given the semi-discrete approximations $c^{l}_{i,j,k}$, the Poisson's equation is discretized as
\begin{equation}\label{DPssn}
-\kappa\Delta_h\psi_{i,j,k}=\sum_{l=1}^Mq^lc^{l}_{i,j,k}+\rho^f_{i,j,k}~~\mbox{for}~~ i,j,k=1, \dots, N.
\end{equation}
The periodic boundary conditions could be formulated as
\begin{equation}\label{PBC1}
\psi_{\frac{1}{2},j,k} =\psi_{N+\frac{1}{2},j,k},~\psi_{i,\frac{1}{2},k} =\psi_{i,N+\frac{1}{2},k},~\psi_{i,j,\frac{1}{2}}=\psi_{i,j,N+\frac{1}{2}}~~\mbox{for}~i,j,k=1,\cdots,N.
\end{equation}

\begin{remark}\label{re:unq}
To guarantee the existence of a solution to~\reff{DPssn}, it is assumed that the initial conditions and fixed charge distribution satisfy $\sum_{l=1}^Mq^lc^{l}_{\rm in}+\rho^f \in \mathring{\mathcal C}_{\rm per}$. In addition, we consider a zero-average constraint for $\psi$, i.e., $\psi \in \mathring{\mathcal C}_{\rm per}$, to make the solution to the discrete Poisson's equation unique.
\end{remark}

With a uniform time step size $\Delta t$ and $t^n= n \Delta t$, a semi-implicit scheme is proposed: Given $\psi^{n}\in \mathring{\mathcal{C}}_{per}$ and $c^{l,n}\in \mathcal{C}_{per},~l=1,\cdots,M$, find $c^{l,n+1}\in \mathcal{C}_{per},~l=1,\cdots,M$, $\psi^{n+1}\in \mathring{\mathcal{C}}_{per}$ by solving
\begin{equation}\label{FuPNP}
\left\{
\begin{aligned}
&\frac{c^{l,n+1}-c^{l,n}}{\Delta t}=\nabla_h \cdot\left(e^{-q^l\psi^n}\nabla_h(c^{l,n+1}e^{q^l\psi^n})\right) , \quad l=1,\cdots,M,\\
&-\kappa\Delta_h\psi^{n+1}=\sum_{l=1}^M q^l c^{l,n+1}+\rho^f , \quad 
\overline{\psi^{n+1}} = 0 . 
\end{aligned}
\right.
\end{equation}
We shall refer the fully discrete scheme~\reff{FuPNP} as the ``Slotboom scheme'' in the following sections.

\section{Theoretical properties} \label{sec:Properties}
In this section, we recall several important properties of the semi-implicit scheme~\reff{FuPNP} in preserving mass conservation, ionic positivity, and energy dissipation at a discrete level. The detailed proof could be found in the works~\cite{LW14, DingWangZhou_JCP2019, DingWangZhou_JCP2020, LiuMaimaiti2021}.

\begin{theorem}\label{t:Conservation}
{\bf (Mass conservation)} The semi-implicit scheme~\reff{FuPNP} conserves total ionic mass,  i.e.,
\begin{align}  
 h^3 \sum_{i,j,k=1}^{N}c_{i,j,k}^{l,n+1}  = h^3 \sum_{i,j,k=1}^{N}c_{i,j,k}^{l,n} . 
  \label{conserve-1}
\end{align}
\end{theorem}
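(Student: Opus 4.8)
The plan is to sum the discrete Nernst--Planck equation in \reff{FuPNP} over all grid points and exploit the telescoping structure of the discrete divergence under periodic boundary conditions. Multiplying the first equation of \reff{FuPNP} by $h^3$ and summing over $i,j,k = 1, \dots, N$ gives
\begin{equation*}
h^3 \sum_{i,j,k=1}^{N} \frac{c^{l,n+1}_{i,j,k} - c^{l,n}_{i,j,k}}{\Delta t} = h^3 \sum_{i,j,k=1}^{N} \nabla_h \cdot \left( e^{-q^l \psi^n} \nabla_h (c^{l,n+1} e^{q^l \psi^n}) \right)_{i,j,k} .
\end{equation*}
It therefore suffices to show that the right-hand side vanishes, i.e., that $\ciptwo{\nabla_h \cdot \vec{f}}{1} = 0$ for any $\vec{f} \in \vec{\mathcal{E}}_{\rm per}$; applying this with $\vec{f} = e^{-q^l\psi^n}\nabla_h(c^{l,n+1}e^{q^l\psi^n})$ (which lies in $\vec{\mathcal{E}}_{\rm per}$ since $\psi^n, c^{l,n+1} \in \mathcal{C}_{\rm per}$) then yields \reff{conserve-1} after multiplying through by $\Delta t$.

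To prove $\ciptwo{\nabla_h \cdot \vec{f}}{1} = 0$, I would expand the discrete divergence as $d_x f^x + d_y f^y + d_z f^z$ and treat each term separately; by symmetry it is enough to handle $\ciptwo{d_x f^x}{1} = h^3 \sum_{i,j,k} \frac{1}{h}(f^x_{i+\frac12,j,k} - f^x_{i-\frac12,j,k})$. For each fixed $j,k$, the inner sum over $i$ telescopes to $f^x_{N+\frac12,j,k} - f^x_{\frac12,j,k}$, which is zero because $f^x \in \mathcal{E}^{\rm x}_{\rm per}$ means $f^x_{N+\frac12,j,k} = f^x_{\frac12,j,k}$ (take $\alpha = 1$, $\beta = \gamma = 0$ in the periodicity definition). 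Summing over $j,k$ preserves the vanishing, and the same argument applies verbatim to the $y$- and $z$-components using the periodicity of $\mathcal{E}^{\rm y}_{\rm per}$ and $\mathcal{E}^{\rm z}_{\rm per}$. This establishes the summation-by-parts identity with no boundary contribution.

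There is essentially no hard step here: the result is a direct consequence of the conservative (divergence) form of the scheme together with the discrete periodicity, and the only thing to be careful about is bookkeeping the staggered-grid indices so that the telescoping is applied on the correct range. One could equivalently phrase the argument as $\ciptwo{\nabla_h \cdot \vec f}{1} = -[\vec f, \nabla_h 1] = 0$ using the discrete summation-by-parts formula and $\nabla_h 1 = 0$, but the direct telescoping computation is the most transparent. Note also that mass conservation holds independently of which mobility average in \reff{mean-1} is used, since the argument never inspects the specific form of $e^{-S^l}$ at the half points — only that the flux is a well-defined periodic edge function.
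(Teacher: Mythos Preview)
Your proof is correct and is exactly the standard summation-by-parts/telescoping argument for conservative schemes under periodic boundary conditions. Note, however, that the paper does not actually supply its own proof of this theorem: it only states the result and refers the reader to~\cite{LW14, DingWangZhou_JCP2019, DingWangZhou_JCP2020, LiuMaimaiti2021} for details, so there is nothing in the paper to compare against beyond confirming that your argument is the expected one.
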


\begin{theorem}\label{t:Postivity}
{\bf (Positivity preserving)} The semi-implicit scheme~\reff{FuPNP} is positivity-preserving in the sense that if $c_{i,j,k}^{l,n} > 0$, then
$$
c_{i,j,k}^{l,n+1}> 0 ~\mbox{ for  } i,j,k=1,\cdots, N.
$$
\end{theorem}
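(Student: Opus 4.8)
\textbf{Proof proposal for Theorem~\ref{t:Postivity} (Positivity preserving).}
The plan is to establish positivity via a variational / M-matrix argument applied to the linear system for $c^{l,n+1}$ at each fixed time step and each species $l$. First I would observe that, since $\psi^n$ is already known at step $n$, the first equation of~\reff{FuPNP} is, for each $l$, a \emph{linear} system in the unknown $c^{l,n+1}$: writing $\mathcal{D}^{l} := e^{-q^l\psi^n}$ at face centers and $w := e^{q^l\psi^n}$ at grid points (both strictly positive since $\psi^n$ is a finite grid function), the equation reads
\begin{equation*}
c^{l,n+1} - \Delta t\, \nabla_h\cdot\bigl(\mathcal{D}^{l}\nabla_h(w\, c^{l,n+1})\bigr) = c^{l,n}.
\end{equation*}
Substituting $g^{l,n+1} := w\, c^{l,n+1}$ turns this into $w^{-1} g^{l,n+1} - \Delta t\,\nabla_h\cdot(\mathcal{D}^{l}\nabla_h g^{l,n+1}) = c^{l,n}$, i.e.\ $\mathcal{L}^{l} g^{l,n+1} = c^{l,n}$ with $\mathcal{L}^{l} := w^{-1} I - \Delta t\,\nabla_h\cdot(\mathcal{D}^{l}\nabla_h\,\cdot\,)$. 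Because $c^{l,n+1}$ and $g^{l,n+1}$ differ only by the strictly positive diagonal factor $w$, it suffices to show $g^{l,n+1} > 0$ componentwise.

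The core step is to show that $\mathcal{L}^{l}$ is a monotone operator: if $\mathcal{L}^{l} g \geq 0$ (componentwise) then $g \geq 0$, and in fact if $\mathcal{L}^{l}g$ has a strictly positive entry somewhere then $g>0$ everywhere. I would argue this by a discrete maximum principle: the matrix representing $\mathcal{L}^{l}$ on $\mathcal{C}_{\rm per}$ has strictly positive diagonal entries $w^{-1}_{i,j,k} + \Delta t\, h^{-2}(\text{sum of the six incident face mobilities})$, nonpositive off-diagonal entries $-\Delta t\, h^{-2}\mathcal{D}^{l}_{\text{face}} < 0$ connecting nearest neighbors, and it is strictly diagonally dominant (the diagonal exceeds the sum of the absolute off-diagonal entries by exactly $w^{-1}_{i,j,k}>0$ in each row). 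Hence $\mathcal{L}^{l}$ is a (nonsingular) M-matrix, so $(\mathcal{L}^{l})^{-1} \geq 0$ entrywise; combined with $c^{l,n} > 0$ this gives $g^{l,n+1} = (\mathcal{L}^{l})^{-1} c^{l,n} > 0$, whence $c^{l,n+1} = w^{-1} g^{l,n+1} > 0$. An equivalent way to phrase the same step, which I might prefer for self-containedness: let $g^{l,n+1}_{i_0,j_0,k_0}$ be the minimum of $g^{l,n+1}$ over the periodic grid; evaluate $\mathcal{L}^{l} g^{l,n+1} = c^{l,n}$ at that index, note that the discrete-Laplacian-type term $-\nabla_h\cdot(\mathcal{D}^l\nabla_h g^{l,n+1})$ at a minimum is $\leq 0$ when $g^{l,n+1}_{i_0,j_0,k_0}$ is the min (each neighbor contributes $\mathcal{D}^l_{\text{face}}(g_{\text{neighbor}} - g_{\text{min}}) \geq 0$), hence $w^{-1}_{i_0,j_0,k_0} g^{l,n+1}_{i_0,j_0,k_0} \geq c^{l,n}_{i_0,j_0,k_0} > 0$, forcing the minimum to be positive. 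One must also know that~\reff{FuPNP} admits a solution at all; I would either cite the unique solvability established in the referenced works~\cite{LW14, DingWangZhou_JCP2019, DingWangZhou_JCP2020, LiuMaimaiti2021}, or note that the M-matrix property just derived already yields invertibility of $\mathcal{L}^l$ and hence existence and uniqueness of $c^{l,n+1}$.

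The only place requiring care — and the main obstacle — is verifying that for \emph{every} one of the four mobility averages in~\reff{mean-1} the face values $e^{-S^l_{i+1/2,j,k}}$ are strictly positive, so that the off-diagonal structure is genuinely that of an M-matrix. For the harmonic, geometric, and arithmetic means this is immediate since each is a positive combination (or positive function) of the strictly positive quantities $e^{-S^l_{i,j,k}}, e^{-S^l_{i+1,j,k}}$. For the entropic mean $ \frac{S^l_{i+1,j,k}-S^l_{i,j,k}}{e^{S^l_{i+1,j,k}}-e^{S^l_{i,j,k}}}$ one needs the elementary fact that $(b-a)/(e^b-e^a)>0$ for all real $a\neq b$ (numerator and denominator have the same sign, since $t\mapsto e^t$ is increasing), and the value extends continuously to $e^{-a}>0$ when $a=b$; this handles the degenerate case. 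Once strict positivity of all face mobilities is in hand, the sign pattern and strict diagonal dominance of $\mathcal{L}^l$ hold uniformly, and the maximum-principle argument closes the proof. I would also remark that periodicity causes no difficulty: the minimum is attained on the fundamental $N\times N\times N$ block, and the six neighbors (with indices read modulo $N$) all lie in the same block, so the neighbor-difference signs are unaffected.
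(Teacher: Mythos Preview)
Your proof is correct. The paper does not actually supply its own proof of this theorem; it only states the result and refers the reader to~\cite{LW14, DingWangZhou_JCP2019, DingWangZhou_JCP2020, LiuMaimaiti2021} for the details. Your argument---rewriting the scheme in the Slotboom variable $g^{l,n+1}=e^{q^l\psi^n}c^{l,n+1}$, observing that the resulting linear operator $\mathcal{L}^l = w^{-1}I - \Delta t\,\nabla_h\cdot(\mathcal{D}^l\nabla_h\,\cdot\,)$ is a strictly diagonally dominant M-matrix (equivalently, applying the discrete minimum principle at the minimizer of $g^{l,n+1}$)---is precisely the standard route used in those references, so there is nothing to contrast. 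Your care with the entropic mean (checking $(b-a)/(e^b-e^a)>0$ and the continuous extension at $a=b$) and with the periodic indexing is appropriate and closes the only points where the argument could have leaked.
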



The fully discrete free energy is given by
 \begin{equation}\label{Fh}
 F^n_h =\sum_{l=1}^M \sum_{i,j,k=1}^{N}h^3\left[c^{l,n}_{i,j,k}\log c^{l,n}_{i,j,k}+\frac{1}{2}(q^lc^{l,n}_{i,j,k}+\rho^f_{i,j,k})\psi^n_{i,j,k}\right],
 \end{equation}
which is a second-order accurate approximation of $F$ in~\reff{F} in space. It has been proved that the semi-implicit scheme~\reff{FuPNP} respects free-energy dissipation at a discrete level conditionally~\cite{LiuMaimaiti2021}. 

\begin{theorem} \label{theorem:energy}
 {\bf (Energy dissipation)} If the time step size $\Delta t$ satisfies the sufficient condition $0 < \dt \le \tau^*$, where
\begin{equation} 
\tau^* = \frac{\kappa}{C_1}e^{-h|q^l|\| \nabla_h\psi^n\|_{\infty}} , \quad \mbox{with}~
C_1=\overset{M}{\underset{l=1}{\sum}}|q^l|^2\underset{1\leq l\leq M}{\max}\{\| c^{l,n+1}\|_{\infty}\}, 
  \label{bound-dt-1} 
\end{equation} 
then the discrete free energy $F_h^n$ is non-increasing, in the sense that
  \begin{equation}\label{dFh/dt}
 F_h^{n+1}-F_h^n\leq -\frac{\Delta t}{2}I^n,
 \end{equation}
 where
\[
 \begin{aligned}
I^n=&\sum_{l=1}^{M}\sum_{i,j,k=1}^Nh^3e^{-S^{l,n}_{i+\frac{1}{2},j,k}}D_x(c^{l,n+1}e^{S^{l,n}})_{i+\frac{1}{2},j,k}D_x(\log(c^{l,n+1}e^{S^{l,n}}))_{i+\frac{1}{2},j,k}\\
&+\sum_{l=1}^{M}\sum_{i,j,k=1}^Nh^3e^{-S^{l,n}_{i,j+\frac{1}{2},k}}D_y(c^{l,n+1}e^{S^{l,n}})_{i,j+\frac{1}{2},k}D_y(\log(c^{l,n+1}e^{S^{l,n}}))_{i,j+\frac{1}{2},k}\\
&+\sum_{l=1}^{M}\sum_{i,j,k=1}^Nh^3e^{-S^{l,n}_{i,j,k+\frac{1}{2}}}D_z(c^{l,n+1}e^{S^{l,n}})_{i,j,k+\frac{1}{2}}D_z(\log(c^{l,n+1}e^{S^{l,n}}))_{i,j,k+\frac{1}{2}}\\
\geq& 0.
\end{aligned}
\]
\end{theorem}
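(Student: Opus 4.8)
\emph{Proof idea.} The plan is to test the Slotboom scheme against the discrete variational derivative of $F_h$ — which, up to the standard quadratic electrostatic correction, is the logarithmic Slotboom potential $\log g^{l,n+1}+1$ with $g^{l,n+1}:=c^{l,n+1}e^{S^{l,n}}$ — and then to absorb the electrostatic remainder into $\tfrac12\Delta t\,I^n$ using the stated time-step restriction.

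First I would split $F_h^{n+1}-F_h^n$ into an entropic part $\sum_l\langle c^{l,n+1}\log c^{l,n+1}-c^{l,n}\log c^{l,n},1\rangle$ and an electrostatic part, the latter equal to $\tfrac12\langle\rho^{n+1},\psi^{n+1}\rangle-\tfrac12\langle\rho^n,\psi^n\rangle$ with $\rho^m:=\sum_l q^lc^{l,m}+\rho^f=-\kappa\Delta_h\psi^m$. Convexity of $x\mapsto x\log x$ and the positivity $c^{l,n},c^{l,n+1}>0$ (Theorem~\ref{t:Postivity}) bound the entropic part by $\sum_l\langle\log c^{l,n+1}+1,c^{l,n+1}-c^{l,n}\rangle$. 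Using $\langle\rho^m,\psi^m\rangle=\kappa\|\nabla_h\psi^m\|_2^2$, completing the square, and the subtracted Poisson relation $-\kappa\Delta_h(\psi^{n+1}-\psi^n)=\sum_l q^l(c^{l,n+1}-c^{l,n})$, the electrostatic part equals $\sum_l\langle q^l\psi^n,c^{l,n+1}-c^{l,n}\rangle+\tfrac{\kappa}{2}\|\nabla_h(\psi^{n+1}-\psi^n)\|_2^2$. Adding the two, the cross terms combine via $\log c^{l,n+1}+q^l\psi^n=\log g^{l,n+1}$; then substituting $c^{l,n+1}-c^{l,n}=\Delta t\,\nabla_h\cdot(e^{-S^{l,n}}\nabla_h g^{l,n+1})$ and summing by parts (the "$+1$" is annihilated by the discrete gradient) turns the entropic-plus-cross part into exactly $-\Delta t\,I^n$, with $I^n\ge0$ because $\log$ is increasing — so $D g^{l,n+1}$ and $D\log g^{l,n+1}$ agree in sign — and $e^{-S^{l,n}}>0$. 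This gives
\[
F_h^{n+1}-F_h^n\le-\Delta t\,I^n+\frac{\kappa}{2}\bigl\|\nabla_h(\psi^{n+1}-\psi^n)\bigr\|_2^2 ,
\]
and it remains to prove $\kappa\|\nabla_h(\psi^{n+1}-\psi^n)\|_2^2\le\Delta t\,I^n$ when $0<\Delta t\le\tau^*$.

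The core estimate — which I expect to be the main obstacle — controls $\|\nabla_h(\psi^{n+1}-\psi^n)\|_2$ by the discrete dissipation. With $J^l:=e^{-S^{l,n}}\nabla_h g^{l,n+1}\in\vec{\mathcal{E}}_{\rm per}$, testing $-\kappa\Delta_h(\psi^{n+1}-\psi^n)=\Delta t\sum_l q^l\nabla_h\cdot J^l$ against $\psi^{n+1}-\psi^n$ yields $\|\nabla_h(\psi^{n+1}-\psi^n)\|_2\le\tfrac{\Delta t}{\kappa}\sum_l|q^l|\,\|J^l\|_2$. I would then bound $\|J^l\|_2^2$ by the $l$-th summand $I_l^n$ of $I^n$: writing $D g^{l,n+1}=\tilde g^{l,n+1}\,D\log g^{l,n+1}$, where $\tilde g^{l,n+1}$ is the edge logarithmic mean of $g^{l,n+1}$, and using that every summand of $I_l^n$ is nonnegative, one obtains $\|J^l\|_2^2\le\bigl(\max_{\rm edges}e^{-S^{l,n}}\tilde g^{l,n+1}\bigr)\,I_l^n$. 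For each of the four averages in~\reff{mean-1} the half-point value $S^{l,n}_{i+\frac12,j,k}$ lies between the two adjacent nodal values of $S^{l,n}$, and $\tilde g^{l,n+1}$ lies between the adjacent nodal values of $g^{l,n+1}=c^{l,n+1}e^{S^{l,n}}$; since $|S^{l,n}_{i,j,k}-S^{l,n}_{i+1,j,k}|=h|q^l|\,|D_x\psi^n_{i+\frac12,j,k}|\le h|q^l|\|\nabla_h\psi^n\|_\infty$, this gives $\max_{\rm edges}e^{-S^{l,n}}\tilde g^{l,n+1}\le\|c^{l,n+1}\|_\infty\,e^{h|q^l|\|\nabla_h\psi^n\|_\infty}$. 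A Cauchy--Schwarz over the species index then produces $\sum_l|q^l|\,\|J^l\|_2\le\bigl(\sum_l|q^l|^2\|c^{l,n+1}\|_\infty e^{h|q^l|\|\nabla_h\psi^n\|_\infty}\bigr)^{1/2}(I^n)^{1/2}\le\bigl(C_1e^{h|q^l|\|\nabla_h\psi^n\|_\infty}\bigr)^{1/2}(I^n)^{1/2}$ (reading $|q^l|$ in the exponent as $\max_l|q^l|$), hence $\kappa\|\nabla_h(\psi^{n+1}-\psi^n)\|_2^2\le\tfrac{\Delta t^2}{\kappa}\,C_1e^{h|q^l|\|\nabla_h\psi^n\|_\infty}\,I^n$. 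If $\Delta t\le\tau^*=\tfrac{\kappa}{C_1}e^{-h|q^l|\|\nabla_h\psi^n\|_\infty}$, the prefactor is at most $\Delta t$, so $\tfrac{\kappa}{2}\|\nabla_h(\psi^{n+1}-\psi^n)\|_2^2\le\tfrac{\Delta t}{2}I^n$; combining with the displayed inequality gives $F_h^{n+1}-F_h^n\le-\tfrac{\Delta t}{2}I^n$. The genuinely delicate point is extracting the factor $\|c^{l,n+1}\|_\infty e^{h|q^l|\|\nabla_h\psi^n\|_\infty}$ uniformly over the four mobility averages while retaining the comparison against $I^n$ itself (rather than the cruder $\|\nabla_h g^{l,n+1}\|_2^2$); the remaining steps are routine discrete summation by parts.
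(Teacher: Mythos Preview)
Your proposal is correct and follows essentially the same route as the paper's proof in Appendix~A: the same convexity/polarization splitting yielding $F_h^{n+1}-F_h^n\le-\Delta t\,I^n+\tfrac{\kappa}{2}\|\nabla_h(\psi^{n+1}-\psi^n)\|_2^2$, the same test of the subtracted Poisson equation to bound the remainder by $\tfrac{\Delta t^2}{\kappa}\bigl(\sum_l|q^l|\,\|J^l\|_2\bigr)^2$, and the same edge-wise comparison between $\|J^l\|_2^2$ and $I_l^n$ via the logarithmic mean of $g^{l,n+1}$. The only cosmetic differences are that you phrase the edge estimate as $\|J^l\|_2^2\le(\max_{\rm edges}e^{-S}\tilde g)\,I_l^n$ and combine species by a weighted Cauchy--Schwarz while the paper takes the reciprocal ratio $I^n/(2C_0\sum_l\|J^l\|_2^2)\ge\min_{\rm edges}(\cdots)$, and that your observation that $S^{l,n}_{i+\frac12}$ lies between the adjacent nodal values handles all four mobility averages uniformly whereas the appendix specializes to the entropic mean.
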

Following the proof given in the work~\cite{LiuMaimaiti2021}, we prove this theorem for the semi-implicit scheme~\reff{FuPNP} with the entropic mean in Appendix~\ref{Ap:A}.

\begin{remark} 
It is noticed that, the sufficient condition for the time step size~\eqref{bound-dt-1} relies on the $\| \cdot \|_\infty$ norm of the concentration variable, as well as the $W_h^{1, \infty}$ norm of the electrostatic potential variable, at both the previous and next time steps. Nonetheless, both quantities (for the numerical solutions) are not automatically available; these quantities have to be justified by the convergence estimate of the numerical solution and the corresponding bounds for the exact solution. The details of these estimates shall be presented in the next section. Afterward, a theoretical justification of the discrete energy dissipation is complete. 
\end{remark} 
%

\section{Convergence analysis}\label{sec:convergence} 

Let $(\psi_e,c^1_e,c^2_e, \cdots,c^M_e)$ be the exact solutions for the PNP system \reff{PNP}. Define $c^l_N:=\mathcal{P}_N c^l_e(\cdot,t)$, $\psi_N:=\mathcal{P}_N \psi_e(\cdot,t)$, $\rho^f_N:=\mathcal{P}_N \rho^f(\cdot)$, the Fourier projection of the exact solution into $\mathcal{B}^K$, which is the space of trigonometric polynomials of degree to and including $K$ (with $N=2K+1$). The following projection approximation is standard: if $(c^l_e,\psi_e) \in L^{\infty}(0,T;H^m_{per}(\Omega))$, for any $m\in\mathbb{N}$ with $0\leq k\leq m$,
\begin{equation}
\begin{aligned}
\|\psi_N-\psi_e \|_{L^{\infty}(0,T;H^k)}&\leq Ch^{m-k}\| \psi_e\|_{L^{\infty}(0,T;H^m)},\\
\|c^l_N-c^l_e \|_{L^{\infty}(0,T;H^k)}&\leq Ch^{m-k}\| c^l_e\|_{L^{\infty}(0,T;H^m)} , \quad l=1,2,\cdots,M.
\end{aligned}
  \label{projection-ets-1} 
\end{equation}

Denote $\psi^n_N=\psi_N(\cdot,t^n)$ and $c^{l,n}_N=c^l_N(\cdot,t^n)$, $l=1,\cdots,M$. We use the mass conservative projection for the initial data: $\psi^{0}=\mathcal{P}_h \psi_N(\cdot,t=0)$, $c^{l,0}=\mathcal{P}_h c^l_N(\cdot,t=0)$, $l=1,\cdots,M$: 
\[
\psi^{0}_{i,j,k}:= \psi_N(x_i,y_j,z_k,t=0),~~c^{l,0}_{i,j,k}:= c^l_N(x_i,y_j,z_k,t=0) ,   
 \quad  l=1,\cdots,M.
\]
The error grid function is defined as
\begin{equation}\label{ca:eq1}
e^{n}_{\psi}:=\mathcal{P}_h \psi^{n}_N-\psi^{n},~~e^{l,n}:=\mathcal{P}_h c^{l,n}_N-c^{l,n} ,  \quad l=1,\cdots,M, \, \, \,  n\in\mathbb{N}^*.
\end{equation}
As indicated above, one can verify that $\bar{e}^{n}_{\psi}=0$,  $\bar{e}^{l,n}=0$, for $l=1,\cdots,M$,  $n\in\mathbb{N}^*$. 

\begin{theorem}\label{Th:convergence}
Let $e^{n}_{\psi}$ and $e^{l,n}$ be the error grid functions defined in~\reff{ca:eq1}.  Then, under the linear refinement requirement $C_1h\leq \Delta t\leq C_2h$, the following convergence result is available as $\dt, h \to 0$: 
\begin{equation}\label{th:eq1}
\sum_{l=1}^M \big\|e^{l,n} \big\|_2 +\Big( \Delta t \sum_{l=1}^M \sum_{k=1}^{n} \big\|\nabla_h e^{l, k} \big\|^2_2 \Big)^{\frac{1}{2}}+\big\|e^{n}_{\psi} \big\|_{H^2_h}\leq C(\Delta t+h^2),
\end{equation}
where the constant $C>0$ is independent of $\Delta t$ and $h$.
\end{theorem}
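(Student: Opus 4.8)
## Proof proposal

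\textbf{Overall strategy.} The plan is to run a standard consistency–stability–convergence argument for the fully discrete scheme \reff{FuPNP}, but with one essential twist: a naive consistency estimate at order $O(\dt + h^2)$ is not enough, because the stability argument will need an a priori $\|\cdot\|_\infty$ bound on the numerical concentrations $c^{l,n+1}$ (the mobility coefficients $e^{-q^l\psi^n}$ and the nonlinear structure force us to control how far $c^{l,n}$ and $\psi^n$ are from the exact solution in the maximum norm). So the first step is to construct a \emph{higher-order approximate solution} $\hat c^l = c^l_N + h^2 c^l_{h,1} + \dt\, c^l_{\dt,1} + \cdots$ (and likewise $\hat\psi$), by an asymptotic expansion in $h$ and $\dt$, chosen so that $\hat c^l,\hat\psi$ satisfy the discrete scheme \reff{FuPNP} up to a residual of order $O(\dt^2 + h^4)$ (or at least $O(\dt^{1+\alpha}+h^{2+\alpha})$ for some $\alpha>0$). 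The correction fields solve linear constant-coefficient parabolic/elliptic problems driven by the leading truncation errors of central differencing and of the first-order time stepping; their smoothness follows from that of the exact solution. This is the technique referenced via \cite{E95,E02,STWW03,baskaran13b,guan14a}.

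\textbf{Key steps in order.} (1) \emph{Construction of the supplementary fields and consistency:} carry out the Taylor/Fourier expansion so that $\hat c^l$, $\hat\psi$ satisfy \reff{FuPNP} with a local truncation error $\tau^{l,n}$, $\tau^{n}_\psi$ bounded by $C(\dt^2+h^4)$ in $\|\cdot\|_2$; crucially $\|\hat c^l - c^l_e\|_\infty = O(\dt+h^2)$ and $\|\nabla_h\hat\psi\|_\infty \le C$ uniformly. (2) \emph{Error equations:} subtract the scheme for $\hat c^l,\hat\psi$ from the scheme for $c^{l,n},\psi^n$; writing $e^{l,n} = \mathcal P_h\hat c^{l,n} - c^{l,n}$ and $e^n_\psi$ similarly, one gets
\[
\frac{e^{l,n+1}-e^{l,n}}{\dt} = \nabla_h\cdot\big(e^{-q^l\hat\psi^n}\nabla_h(e^{l,n+1}e^{q^l\hat\psi^n})\big) + \text{(nonlinear mobility terms)} + \tau^{l,n},
\]
coupled with $-\kappa\Delta_h e^{n+1}_\psi = \sum_l q^l e^{l,n+1}$. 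The mobility discrepancy terms involve $e^{-q^l\hat\psi^n} - e^{-q^l\psi^n}$, which must be handled by the elliptic estimate \reff{lem1:1}–\reff{lem1:2} applied to $e^n_\psi$ and by an induction hypothesis that $\|e^{l,n}\|_\infty$ (hence $\|\psi^n\|_{W^{1,\infty}_h}$) stays bounded. (3) \emph{Energy estimate:} test the concentration error equation with $2\dt\, e^{l,n+1}e^{q^l\hat\psi^n}$ (or with $e^{l,n+1}$ itself after symmetrizing), exploiting the self-adjoint, positive-definite structure of $\nabla_h\cdot(e^{-q^l\hat\psi^n}\nabla_h(\cdot e^{q^l\hat\psi^n}))$ — this is exactly where the \emph{harmonic mean} choice is convenient, since it makes the discrete operator cleanly of the form $-d_x(\mathcal D D_x(\cdot))$ with a positive face-centered coefficient and gives a clean summation-by-parts identity. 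Summing over $k=1,\dots,n$, absorbing the coupling term via the Poisson estimate $\|e^n_\psi\|_{H^2_h}\le C\sum_l\|e^{l,n}\|_2$, and applying discrete Gronwall yields \reff{th:eq1} in $\|\cdot\|_2$ and in the $\dt\sum\|\nabla_h e^{l,k}\|_2^2$ norm; the $\|e^n_\psi\|_{H^2_h}$ bound then follows from the elliptic estimate. (4) \emph{Closing the induction:} from $\|e^{l,n}\|_2 + \|\Delta_h e^n_\psi\|_2 \le C(\dt+h^2)$ and \reff{lem1:2}–\reff{lem1:3}, together with the linear refinement $C_1 h\le \dt\le C_2 h$, one recovers $\|e^{l,n}\|_\infty \le C(\dt+h^2)\le C h \to 0$ and $\|\nabla_h e^n_\psi\|_\infty \le C h^{-1}\|e^n_\psi\|_\infty \le C h^{-1}\cdot C h^2 \to 0$, which is $\le$ the a priori constant for $h$ small — this is why the \emph{higher-order} consistency (residual $O(\dt^2+h^4)$ rather than $O(\dt+h^2)$) is needed: without it, the $h^{-1}$ loss in \reff{lem1:3} would not be controllable.

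\textbf{Main obstacle.} The hard part will be step (3), specifically controlling the nonlinear mobility terms $\nabla_h\cdot\big((e^{-q^l\hat\psi^n} - e^{-q^l\psi^n})\nabla_h(c^{l,n+1}e^{q^l\psi^n})\big)$ and the mixed terms $\nabla_h\cdot\big(e^{-q^l\hat\psi^n}\nabla_h(e^{l,n+1}(e^{q^l\hat\psi^n}-e^{q^l\psi^n}))\big)$ that appear when one expands the difference of the two schemes. These must be bounded by $\|e^n_\psi\|_{H^2_h}$ times powers of $\|\nabla_h(c^{l,n+1}e^{q^l\psi^n})\|$, which requires both the a priori $W^{1,\infty}_h$ control on $\psi^n$ (from the induction) and a discrete $\|\nabla_h c^{l,n+1}\|_2$ bound that itself must be extracted from the energy estimate or assumed bootstrapped — there is a genuine circularity here that the higher-order expansion plus the linear refinement constraint is designed to break. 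A secondary technical point is verifying that the discrete summation-by-parts identities for the operator $\nabla_h\cdot(\mathcal D\nabla_h(\cdot e^{S})\,)$ with the harmonic-mean $\mathcal D$ actually produce a sign-definite quadratic form in $e^{l,n+1}$ after the weighted testing; this is where the paper's remark that the harmonic mean "brings lots of convenience" is cashed in, and where other mobility averages would demand extra estimates.
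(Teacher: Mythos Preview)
Your outline is correct and follows the same overall strategy as the paper: higher-order consistency via an asymptotic correction, an $\ell^2$ energy estimate on the error equation closed by discrete Gronwall, and an induction on an a-priori maximum-norm bound made possible by the linear refinement $\dt\sim h$. A few implementation choices in the paper differ from yours and are worth noting. First, the paper adds only a $\dt$-correction, $\check c^l = c^l_N + \dt\,\mathcal P_N c^l_{\dt}$, achieving residual $O(\dt^2+h^2)$; no spatial correction is needed, since the a-priori hypothesis $\|\tilde c^{l,n}\|_2\le \dt^{15/8}+h^{15/8}$ is recovered from $\hat C(\dt^2+h^2)\le \dt^{15/8}+h^{15/8}$ for small $\dt,h$. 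Second, the paper tests the error equation with $2\tilde c^{l,n+1}$ itself (not the weighted version you first suggest) and uses the pointwise harmonic-mean identity $e^{-q^l\psi^n_{i+1/2,j,k}}\cdot (A_x e^{q^l\psi^n})_{i+1/2,j,k}=1$ to extract exactly $\|D_x\tilde c^{l,n+1}\|_2^2$ as the leading diffusion term; this is the precise ``convenience'' of the harmonic mean. Third, and most importantly for the obstacle you flag, the paper's splitting of the nonlinear difference places the \emph{constructed} quantity $\check c^{l,n+1}e^{q^l\check\psi^n}$ (whose $W^{1,\infty}_h$ bound is known from regularity) inside the gradient of the coupling terms, so that the mobility difference $e^{-q^l\check\psi^n}-e^{-q^l\psi^n}$ is estimated in $\|\cdot\|_2$ against a uniformly bounded factor. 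Your version, with the numerical $c^{l,n+1}e^{q^l\psi^n}$ under the gradient, creates exactly the circularity you identify; the paper simply sidesteps it by this choice of splitting rather than by any bootstrap on $\|\nabla_h c^{l,n+1}\|_2$.
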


\subsection{Higher-order consistency analysis}
It follows from the truncation error analysis that the numerical solution to the Slotboom scheme~\reff{FuPNP} approximates the projection solution $(\psi_N, c^1_N,\cdots,c^M_N )$ with only first order accuracy in time and second order accuracy in space. Such a first order accuracy in time is not sufficient to recover an a-priori $\ell^{\infty}$ bound to ensure the energy stability analysis. Instead, a higher order consistency analysis is performed by adding perturbation terms to recover such a bound.  We first construct auxiliary variables, $c^l_{\Delta t}$, $\psi_{\Delta t}$, $\check{c}^l$, $\check{\psi}$, and $\check{\rho}^f$: 
\begin{equation} \label{s1:eq0}
\check{\psi} =\psi_N+ \dt \mathcal{P}_N  \psi_{\Delta t} , \, \, \check{\rho}^f=\rho^f_N, \, 
\check{c}^l =c^l_N+ \dt \mathcal{P}_N c^l_{\Delta t} , \quad l=1,\cdots,M, 
\end{equation}
so that the higher $O(\Delta t^2+h^2)$ consistency is achieved between the numerical solution and the constructed solution $(\check{\psi}, \check{c}^l)$. The constructed variables $\psi_{\Delta t}$, $c^l_{\Delta t}$, $l=1, \cdots,M$, which solely depend on the exact solutions $c^l_e$ and $\psi_e$, can be found by a perturbation expansion.


It follows from the Taylor expansion for the temporal discretization and the estimate for the projection solution that
\begin{equation}\label{s1:eq1}
\begin{aligned}
\frac{c^{l,n+1}_N-c^{l,n}_N}{\Delta t}=&\nabla\cdot\left(e^{-q^l\psi^{n}_N}\nabla(c^{l,n+1}_Ne^{q^l\psi^{n}_N})\right) +\Delta t G^{l,n}_{(0)}+O(\Delta t^2) , \quad l=1,\cdots,M, \\
-\kappa\Delta\psi^{n+1}_N=&\sum_{l=1}^M q^lc^{l,n+1}_N+\rho^f_N ,  \quad 
\int_\Omega \, \psi^{n+1}_N d {\bf x} = 0, 
\end{aligned}
\end{equation}
where the functions $G^{l,n}_{(0)}$, $l=1,\cdots,M,$ are smooth enough in the sense that the temporal and spatial derivatives are bounded. In addition, $G^{l,n}_{(0)}$ is a spatial function with sufficient regularity for a fixed time step $t^n$.

The temporal perturbation variables $\left(\psi_{\Delta t},c^1_{\Delta t},\cdots, c^M_{\Delta t}\right)$ can be found by solving 
\begin{equation*}
\begin{aligned}
&\partial_t c^l_{\Delta t}=\nabla\cdot\left(e^{-q^l\psi_N}\nabla(q^lc^l_Ne^{q^l\psi_N}\psi_{\Delta t})+e^{-q^l\psi_N} \nabla (e^{q^l\psi_N}c^l_{\Delta t}) -q^le^{-q^l\psi_N} \psi_{\Delta t}\nabla(c^l_Ne^{q^l\psi_N})\right)-G^l_{(0)} ,  \\
&-\kappa\Delta\psi_{\Delta t}=\sum_{l=1}^M q^l c^l_{\Delta t} ,  \quad 
\int_\Omega \, \psi_{\dt} d {\bf x} = 0 . 
\end{aligned}
\end{equation*}
This is a linear PDE system that depends on the projection solutions $(\psi_N,c^1_N,\cdots,c^M_N)$, and the existence of its solution is straightforward.  In addition, the derivatives of $(\psi_{\Delta t},c^1_{\Delta t},\cdots,c^M_{\Delta t})$ in various orders are bounded. A semi-implicit discretization leads to
\begin{equation}\label{s1:eq2}
\begin{aligned}
\frac{c^{l,n+1}_{\Delta t}-c^{l,n}_{\Delta t}}{\Delta t}=&\nabla\cdot\left(e^{-q^l\psi_N^n}\nabla(q^lc^{l,n+1}_Ne^{q^l\psi_N^n}\psi_{\Delta t}^n)+e^{-q^l\psi_N^n} \nabla (e^{q^l\psi_N^n}c^{l,n+1}_{\Delta t})\right.\\
&\qquad\left. -q^le^{-q^l\psi_N^n} \psi_{\Delta t}^n\nabla(c^{l,n+1}_Ne^{q^l\psi_N^n})\right)-G^{l,n}_{(0)}+O(\Delta t) , \quad l=1,\cdots,M,\\
-\kappa\Delta\psi^{n+1}_{\Delta t}=&\sum_{l=1}^M q^lc^{l,n+1}_{\Delta t} ,   \quad 
\int_\Omega \, \psi_{\dt}^{n+1} d {\bf x} = 0 . 
\end{aligned}
\end{equation}
Therefore, a combination of \reff{s1:eq1} and \reff{s1:eq2} gives the temporal truncation error for $\check{c}^l$ and $\check{\psi}$:
\begin{equation}\label{s1:eq3}
\begin{aligned}
\frac{\check{c}^{l,n+1}-\check{c}^{l,n}}{\Delta t}=&\nabla\cdot\left(e^{-q^l\check{\psi}^{n}}\nabla(\check{c}^{l,n+1} e^{q^l\check{\psi}^{n}})\right)+O(\Delta t^2) , \quad l=1,\cdots,M,\\
-\kappa\Delta\check{\psi}^{n+1}=&\sum_{l=1}^M q^l\check{c}^{l,n+1}+\check{\rho}^f ,  \quad 
\int_\Omega \, \check{\psi}^{n+1} d {\bf x} = 0 . 
\end{aligned}
\end{equation}
In fact, the following linearized expansions have been used in the derivation of \reff{s1:eq3}:
\[
\begin{aligned}
&e^{q^l\check{\psi}}=e^{q^l(\psi_N+\Delta t \mathcal{P}_N \psi_{\Delta t})}=e^{q^l\psi_N}+\Delta tq^l\mathcal{P}_N \psi_{\Delta t}e^{q^l\psi_N}+O(\Delta t^2),\\
&e^{-q^l\check{\psi}}=e^{-q^l(\psi_N+\Delta t \mathcal{P}_N \psi_{\Delta t})}=e^{-q^l\psi_N}-\Delta tq^l\mathcal{P}_N \psi_{\Delta t}e^{-q^l\psi_N}+O(\Delta t^2).
\end{aligned}
\]
After spatial discretization, one can obtain by the Taylor expansion for $(\check{\psi},\check{c}^1,\cdots,\check{c}^M)$:  
\begin{equation}\label{s1:eq7}
\begin{aligned}
\frac{\check{c}^{l,n+1}-\check{c}^{l,n}}{\Delta t}=&\nabla_h\cdot\left(e^{-q^l\check{\psi}^{n}}\nabla_h(\check{c}^{l,n+1}e^{q^l\check{\psi}^{n}})\right)+\tau^{l,n+1} , \quad l=1,\cdots,M,\\
-\kappa\Delta_h\check{\psi}^{n+1}=&\sum_{l=1}^M q^l\check{c}^{l,n+1}+\check{\rho}^f,  \quad 
 \overline{\check{\psi}^{n+1}} = 0 , 
\end{aligned}
\end{equation}
where 
$$\|\tau^{l,n+1} \|_2\leq C(\Delta t^2+h^2).$$

\begin{remark}
Since the correction functions only depend on $(\psi_N,c^1_N,c^2_N, \cdots,c^M_N)$ and the exact solution, we can obtain discrete $W^{1,\infty}$ bounds from the regularity of the constructed solutions:
\begin{equation}
\begin{aligned}\label{r:eq1}
&\| \check{c}^{l,n}\|_{\infty}\leq C_0^*,~&\| \nabla_h\check{c}^{l,n}\|_{\infty}&\leq C_0^* , \quad l=1,\cdots,M,\\
&\| \check{\psi}^{n}\|_{\infty}\leq C_0^*,~&\| \nabla_h\check{\psi}^{n}\|_{\infty}&\leq C_0^* , 
\end{aligned}
\end{equation}
at any time step $t^n$.
\end{remark}

\subsection{Error estimate}
Instead of working on the original numerical error functions defined in~\eqref{ca:eq1}, we first consider the following ones 
\begin{equation} 
\td{\psi}^n:=\mathcal{P}_h \check{\psi}^n-\psi^n,~~\td{c}^{l,n}:=\mathcal{P}_h \check{c}^{l,n}-c^{l,n} , \quad l=1,2,\cdots,M, \, \, N\in \mathbb{N}^*. \label{error function-1} 
\end{equation} 
By the consistency estimate~\reff{s1:eq7}, a higher-order truncation accuracy is available for these numerical error functions.

Subtracting the numerical scheme \reff{FuPNP} from the consistency estimate \reff{s1:eq7} yields
\begin{equation}
\begin{aligned}\label{s1:eq8}
\frac{\td{c}^{l,n+1}-\td{c}^{l,n}}{\Delta t}=&\nabla_h\cdot\left((e^{-q^l\check{\psi}^n}-e^{-q^l\psi^n})\nabla_h(\check{c}^{l,n+1} e^{q^l\check{\psi}^n})\right)+\nabla_h\cdot\left(e^{-q^l\psi^n}\nabla_h(\td{c}^{l,n+1} e^{q^l\psi^n})\right)\\
&+\nabla_h\cdot\left(e^{-q^l\psi^n}\nabla_h(\check{c}^{l,n+1}(e^{q^l\check{\psi}^n}-e^{q^l\psi^n}))\right)+\tau^{l,n+1} , \quad  l=1,\cdots,M,\\
-\kappa\Delta_h\td{\psi}^{n+1}=&\sum_{l=1}^Mq^l\td{c}^{l,n+1}, \quad 
\overline{\td{\psi}^{n+1}} = 0 , 
\end{aligned}
\end{equation}
where $\td{\psi}^{n+1},~ \td{c}^{l,n+1}\in \mathring{\calC}_{per}$,~$l=1,2,\cdots,M$. As an example, we now take the harmonic mean for the evaluation of the nonlinear mobility functions $e^{-q^l \psi^n}$ (or $e^{-q^l \check{\psi}^n}$) over the staggered mesh points. 

Since $\check{c}^{l,n+1}$ and $\check{c}^{l,n+1} e^{q^l\check{\psi}^n}$ only depend on the exact solutions and the constructed variables, one can assume a discrete $W^{1,\infty}$ bound 
	\begin{equation} 
\|  \nabla_h ( \check{c}^{l,n+1} e^{q^l\check{\psi}^n} ) \|_\infty ,  \, \, 
 \| \check{c}^{l,n+1} \|_\infty  + \| \nabla_h \check{c}^{l,n+1} \|_\infty \le C_0^\star . 
	\label{assumption:W1-infty bound-2}  
	\end{equation} 
In addition, the following a-priori assumption is made, so that the nonlinear analysis could be accomplished by an induction argument: 
\begin{eqnarray} 
  \| \tilde{c}^{l,n} \|_2  \le \dt^\frac{15}{8} + h^\frac{15}{8}  ,  \quad 
  l = 1, 2, \cdots, M  .  
   \label{a priori-1} 
\end{eqnarray} 
This a-priori assumption will be recovered by the optimal rate convergence analysis at the next time step, as will be demonstrated later. With the error equation for $\td{\psi}^{n+1}$ in~\eqref{s1:eq8}, we apply the preliminary inequalities~\eqref{lem1:2}, \eqref{lem1:3} in Lemma~\ref{Lem1} and obtain
\begin{align} 
  & 
  \| \td{\psi}^n \|_\infty   \le C \| \Delta_h \td{\psi}^n \|_2 
  \le C  \sum_{l=1}^M q^l \| \td{c}^{l,n} \|_2 \le C  ( \dt^\frac{15}{8} + h^\frac{15}{8} ) 
   \le \frac14 , \label{a priori-6-1}   
\\
  & 
   \| \nabla_h \tilde{\psi}^n \|_\infty  \le \frac{C \| \tilde{\psi}^n \|_\infty}{h} \le   \frac{C ( \dt^\frac{15}{8} + h^\frac{15}{8} ) }{h} \le  C ( \dt^\frac{7}{8} + h^\frac{7}{8} ) \le \frac14 ,  \label{a priori-6-2}    
\end{align}   
in which the fact that $\td{\psi}^n \in \mathring{\calC}_{per}$ has been applied. Therefore, its combination with the regularity assumption~\eqref{r:eq1} gives a $W_h^{1,\infty}$ bound for the numerical solution $\psi^n$ at the previous time step: 
	\begin{eqnarray} 
 \| \psi^n \|_\infty \le \| \check{\psi}^n  \|_\infty +  \| \td{\psi}^n \|_\infty 
  \le \tilde{C}_3:= C_0^\star + \frac14  ,  \quad 
 \| \nabla_h \psi^n \|_\infty \le \| \nabla_h \check{\psi}^n  \|_\infty +  \| \nabla_h \td{\psi}^n \|_\infty 
  \le \tilde{C}_3   .  \label{a priori-7} 
	\end{eqnarray}

Before proceeding with the convergence analysis, the following preliminary results are needed. 

\begin{proposition} \label{prop: prelim est} 
Under the a-priori assumption~\eqref{a priori-1} for the numerical error function at the previous time step, we have 
\begin{align} 
  & 
  \| e^{-q^l \psi^n} \|_\infty , \, \, \| \nabla_h ( e^{q^l \psi^n} ) \|_\infty  \le \tilde{C}_4 ,  
  \label{prelim est-0-1} 
\\
  & 
  \|  e^{-q^l \check{\psi}^n} - e^{-q^l \psi^n} \|_2 \le \tilde{C}_5 \| \td{\psi}^n \|_2 , 
  \label{prelim est-0-2}        
\\
  & 
  \|  e^{q^l \check{\psi}^n} - e^{q^l \psi^n} \|_2 \le \tilde{C}_6 \| \td{\psi}^n \|_2 ,  \quad 
   \|  \nabla_h ( e^{q^l \check{\psi}^n} - e^{q^l \psi^n} ) \|_2 
    \le \tilde{C}_7 ( \| \td{\psi}^n \|_2 + \| \nabla_h \td{\psi}^n \|_2 ) ,  \label{prelim est-0-3} 
\end{align} 
for any $l = 1, \cdots, M$, in which the harmonic mean formula in~\eqref{mean-1} has been applied in the evaluation of $e^{-q^l \check{\psi}^n}$ and $e^{-q^l \psi^n}$ at the staggered grid points. Notice that the constants $\tilde{C}_j$ ($4 \le j \le 7$) are independent of $\dt$ and $h$. 
\end{proposition}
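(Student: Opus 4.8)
The plan is to derive all four estimates from elementary Taylor / mean-value expansions of the exponential, combined with the $W_h^{1,\infty}$ bounds already at our disposal. Concretely, the a-priori assumption~\eqref{a priori-1} yields, via~\eqref{a priori-6-1}--\eqref{a priori-7}, the controls $\| \psi^n \|_\infty, \| \nabla_h \psi^n \|_\infty \le \tilde{C}_3$, $\| \td{\psi}^n \|_\infty \le \frac14$, $\| \nabla_h \td{\psi}^n \|_\infty \le \frac14$, together with $\| \check{\psi}^n \|_\infty, \| \nabla_h \check{\psi}^n \|_\infty \le C_0^\star$ from~\eqref{r:eq1}; these are the only inputs. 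For~\eqref{prelim est-0-1}, note that at the staggered points the harmonic mean $e^{-q^l \psi^n_{i+1/2,j,k}} = \big( \tfrac12 ( e^{q^l \psi^n_{i+1,j,k}} + e^{q^l \psi^n_{i,j,k}} ) \big)^{-1}$ lies between $e^{-|q^l| \| \psi^n \|_\infty}$ and $e^{|q^l| \| \psi^n \|_\infty}$ because each $e^{q^l \psi^n_{i,j,k}}$ does, which gives the uniform bound; and a single mean value theorem applied to $e^{q^l \psi^n_{i+1,j,k}} - e^{q^l \psi^n_{i,j,k}}$ gives $| D_x ( e^{q^l \psi^n} )_{i+1/2,j,k} | \le |q^l| e^{|q^l| \tilde{C}_3} | D_x \psi^n_{i+1/2,j,k} |$, and likewise in the $y,z$ directions, so $\| \nabla_h ( e^{q^l \psi^n} ) \|_\infty \le |q^l| e^{|q^l| \tilde{C}_3} \tilde{C}_3$. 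Take $\tilde{C}_4$ to be the larger of the two bounds.

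For~\eqref{prelim est-0-2}, write the difference of the reciprocal arithmetic means as $\tfrac1A - \tfrac1B = (B-A)/(AB)$, where $A, B \ge e^{-|q^l| \max\{ \| \psi^n \|_\infty, \| \check{\psi}^n \|_\infty \}} =: \delta > 0$; since $B - A$ is an average of $e^{q^l \psi^n} - e^{q^l \check{\psi}^n}$ at the two adjacent grid points, the mean value theorem bounds it pointwise by a constant multiple of $|\td{\psi}^n|$ at those points, and shift-invariance of the grid $\ell^2$-norm converts this into $\| e^{-q^l \check{\psi}^n} - e^{-q^l \psi^n} \|_2 \le \tilde{C}_5 \| \td{\psi}^n \|_2$ with $\tilde{C}_5$ depending only on $\delta$ and $\tilde{C}_3, C_0^\star$. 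The first estimate in~\eqref{prelim est-0-3} is even more direct: pointwise, $| e^{q^l \check{\psi}^n_{i,j,k}} - e^{q^l \psi^n_{i,j,k}} | \le |q^l| e^{|q^l| \max\{ \| \psi^n \|_\infty, \| \check{\psi}^n \|_\infty \}} | \td{\psi}^n_{i,j,k} |$, so one may take $\tilde{C}_6 = |q^l| e^{|q^l|(\tilde{C}_3 + C_0^\star)}$.

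The only estimate needing slightly more care is the second one in~\eqref{prelim est-0-3}. I would use the factorization $e^{q^l \check{\psi}^n} - e^{q^l \psi^n} = e^{q^l \psi^n} ( e^{q^l \td{\psi}^n} - 1 )$ together with the discrete Leibniz rule $D_x ( fg )_{i+1/2} = f_{i+1} ( D_x g )_{i+1/2} + g_i ( D_x f )_{i+1/2}$, applied with $f = e^{q^l \psi^n}$ and $g = e^{q^l \td{\psi}^n} - 1$. Here $\| f \|_\infty$ and $\| D_x f \|_\infty$ are controlled exactly as in~\eqref{prelim est-0-1}; the inequality $|e^t - 1| \le |t| e^{|t|}$ with $\| \td{\psi}^n \|_\infty \le \tfrac14$ gives $\| g \|_2 \le |q^l| e^{|q^l|/4} \| \td{\psi}^n \|_2$, while the mean value theorem with $\| \td{\psi}^n \|_\infty \le \tfrac14$ gives $\| D_x g \|_2 = \| D_x ( e^{q^l \td{\psi}^n} ) \|_2 \le |q^l| e^{|q^l|/4} \| D_x \td{\psi}^n \|_2$. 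Adding the three directional contributions then yields $\| \nabla_h ( e^{q^l \check{\psi}^n} - e^{q^l \psi^n} ) \|_2 \le \tilde{C}_7 ( \| \td{\psi}^n \|_2 + \| \nabla_h \td{\psi}^n \|_2 )$. I expect the main technical nuisance throughout to be purely organizational: bookkeeping between grid-point and staggered-point $\ell^2$-norms so that every constant is demonstrably independent of $h$, and keeping the lower bound $\delta$ on the arithmetic means explicit in~\eqref{prelim est-0-2}. There is no conceptual obstacle.
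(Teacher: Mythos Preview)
Your proposal is correct. For~\eqref{prelim est-0-1}, \eqref{prelim est-0-2}, and the first part of~\eqref{prelim est-0-3} your argument coincides with the paper's: harmonic mean bounds, the identity $1/A - 1/B = (B-A)/(AB)$, and a pointwise mean-value expansion, respectively.

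The genuine difference lies in the second inequality of~\eqref{prelim est-0-3}. The paper does \emph{not} use your factorization $e^{q^l\check{\psi}^n} - e^{q^l\psi^n} = e^{q^l\psi^n}(e^{q^l\td{\psi}^n}-1)$. Instead it writes the pointwise mean-value representation $e^{q^l\check{\psi}^n_{i,j,k}} - e^{q^l\psi^n_{i,j,k}} = q^l e^{q^l\xi^{(4)}_{i,j,k}}\td{\psi}^n_{i,j,k}$ and then applies $D_x$ to this product, which forces it to control the finite difference of the implicitly defined coefficient $e^{q^l\xi^{(4)}}$. That term is handled by comparing $e^{q^l\xi^{(4)}}$ to $e^{q^l\psi^n}$ via the $O(\dt^{15/8}+h^{15/8})$ smallness of $\td{\psi}^n$ and then dividing by $h$ under the linear refinement constraint $C_1 h \le \dt \le C_2 h$. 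Your route avoids this altogether: since both factors $e^{q^l\psi^n}$ and $e^{q^l\td{\psi}^n}-1$ are explicit, the discrete Leibniz rule reduces everything to $\|e^{q^l\psi^n}\|_\infty$, $\|\nabla_h(e^{q^l\psi^n})\|_\infty$ (already done), and $\|g\|_2$, $\|\nabla_h g\|_2$ for $g = e^{q^l\td{\psi}^n}-1$, which need only $\|\td{\psi}^n\|_\infty \le \tfrac14$. In particular your argument for this estimate does not require the mesh-ratio condition, whereas the paper's does. (You list $\|\nabla_h\td{\psi}^n\|_\infty \le \tfrac14$ among your inputs, but you never actually use it; only the $\ell^\infty$ bound on $\td{\psi}^n$ enters.) Either way the constants are independent of $\dt$ and $h$, so both approaches yield the proposition.
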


\begin{proof} 
By the harmonic mean formula  for $e^{-q^l \psi^n}$ (in~\eqref{mean-1}), we find that 
\begin{equation} 
  \Big| e^{-q^l \psi^n_{i+{\frac{1}{2}},j,k}} \Big| 
  = \Big( \frac{e^{ q^l \psi^n_{i+1,j,k}}+ e^{ q^l \psi^n_{i,j,k}}}{2}  \Big)^{-1}   
  \le \Big( e^{ |q^l |\cdot (-\tilde{C}_3) } \Big)^{-1}   
  =  e^{ |q_l| \tilde{C}_3 } ,  \label{prelim est-1}   
\end{equation} 
at each staggered grid point $(i+\frac12, j,k)$, in which the $\| \cdot \|_\infty$ bound~\eqref{a priori-7} has been used. Similar derivations could be applied at the staggered grid points in the $y$ and $z$ directions. Then we get $\| e^{-q^l \psi^n} \|_\infty \le e^{ |q^l| \tilde{C}_3 }$.  Using similar argument, we are able to prove that $\| e^{-q^l \check{\psi}^n} \|_\infty \le e^{ |q^l |C_0^\star }$. 

For the second inequality in~\eqref{prelim est-0-1}, we observe the following expansion at each numerical mesh cell, from $(i,j,k)$ to $(i+1,j,k)$: 
\begin{equation} 
  D_x ( e^{q^l \psi^n} )_{i+{\frac{1}{2}},j,k}  
  = \frac{e^{ q^l \psi^n_{i+1,j,k}} - e^{ q^l \psi^n_{i,j,k}}}{h}   
  = q^l e^{ q^l \xi^{(1)}} ( D_x \psi^n )_{i+{\frac{1}{2}},j,k}  ,  \label{prelim est-2-1}   
\end{equation} 
in which the intermediate value theorem has been applied, and $\xi^{(1)}$ is between $\psi^n_{i,j,k}$  and $\psi^n_{i+1,j,k}$. Then we conclude that 
\begin{equation} 
   \|  e^{ q^l \xi^{(1)}} \|_\infty \le e^{|q^l| \cdot \| \psi^n \|_\infty} 
   \le e^{|q^l| \tilde{C}_3} .  \label{prelim est-2-2}   
\end{equation} 
As a consequence, an application of discrete H\"older inequality gives 
\begin{equation} 
  \| D_x ( e^{q^l \psi^n} ) \|_\infty \le  \| e^{ q^l \xi^{(1)}} \|_\infty 
  \cdot \|  D_x \psi^n \|_\infty \le |q^l | e^{|q^l | \tilde{C}_3}  \tilde{C}_3 ,  \label{prelim est-2-3}   
\end{equation}  
in which the a-priori estimate~\eqref{a priori-7} has been applied in the last step. Similar bounds could be derived for $\| D_y ( e^{q^l \psi^n} ) \|_\infty$, $\| D_z ( e^{q^l \psi^n} ) \|_\infty$, respectively. Then we obtain $\| \nabla_h ( e^{q^l \psi^n} ) \|_\infty \le |q^l | e^{|q^l | \tilde{C}_3}  \tilde{C}_3$.  As a result, both inequalities in~\eqref{prelim est-0-1} have been proved, by taking $\tilde{C}_4 = \max ( e^{ |q^l| \tilde{C}_3 }, |q^l | e^{|q^l | \tilde{C}_3}  \tilde{C}_3 )$. 
  
  To prove the inequality~\eqref{prelim est-0-2}, we see that the expansion~\eqref{prelim est-1} indicates the following identity: 
\begin{equation} 
\begin{aligned} 
  & 
    e^{-q^l \check{\psi}^n_{i+{\frac{1}{2}},j,k}} - e^{-q^l \psi^n_{i+{\frac{1}{2}},j,k}}  
\\
  = & 
    - e^{-q^l \check{\psi}^n_{i+{\frac{1}{2}},j,k}} e^{-q^l \psi^n_{i+{\frac{1}{2}},j,k}}     
  \Big( \frac{e^{ q^l \check{\psi}^n_{i+1,j,k}} + e^{ q^l \check{\psi}^n_{i,j,k}}}{2}  
  - \frac{e^{ q^l \psi^n_{i+1,j,k}}+ e^{ q^l \psi^n_{i,j,k}}}{2}  \Big)
\\
  = & 
    - e^{-q^l \check{\psi}^n_{i+{\frac{1}{2}},j,k}} e^{-q^l \psi^n_{i+{\frac{1}{2}},j,k}}    
  \Big( \frac{e^{ q^l \check{\psi}^n_{i+1,j,k}} - e^{ q^l \psi^n_{i+1,j,k}} }{2}  
  + \frac{ e^{ q^l \check{\psi}^n_{i,j,k}} - e^{ q^l \psi^n_{i,j,k}}}{2}  \Big)     
\\
  = & 
    - \frac{q^l}{2} e^{-q^l \check{\psi}^n_{i+{\frac{1}{2}},j,k}} e^{-q^l \psi^n_{i+{\frac{1}{2}},j,k}}    
  \Big( e^{ q^l \xi^{(2)} } \td{\psi}^n_{i+1,j,k}   + e^{ q^l \xi^{(3)} } \td{\psi}^n_{i,j,k}  \Big)  ,   
\end{aligned} 
  \label{prelim est-3-1}   
\end{equation} 
in which the intermediate value theorem has been applied, $\xi^{(2)}$ is between $\psi^n_{i+1,j,k}$  and $\check{\psi}^n_{i+1,j,k}$, $\xi^{(3)}$ is between $\psi^n_{i,j,k}$  and $\check{\psi}^n_{i,j,k}$, respectively. By the regularity assumption~\eqref{r:eq1} and the a-priori $\| \cdot \|_\infty$ bound~\eqref{a priori-7}, we see that 
\begin{equation} 
   \| e^{ q^l \xi^{(2)} } \|_\infty , \, \,  \| e^{ q^l \xi^{(3)} } \|_\infty 
   \le e^{|q^l| \tilde{C}_3} .    \label{prelim est-3-2}   
\end{equation}    
Therefore, an application of discrete H\"older inequality reveals that 
\begin{equation} 
\begin{aligned} 
  & 
    \| e^{-q^l \check{\psi}^n } - e^{-q^l \psi^n }  \|_2 
\\
  \le & 
    \frac{|q^l|}{2}  \| e^{-q^l \check{\psi}^n } \|_\infty \cdot \|  e^{-q^l \psi^n }   \|_\infty    
  \Big( \| e^{ q^l \xi^{(2)} } \|_\infty \cdot \| \td{\psi}^n \|_2   
  + \| e^{ q^l \xi^{(3)} } \|_\infty \cdot \| \td{\psi}^n \|_2  \Big) 
\\
  \le & 
    \frac{|q^l|}{2}    e^{ |q^l| \tilde{C}_3 } \cdot  e^{ |q^l| \tilde{C}_0^* }    
  \cdot 2 e^{|q^l| \tilde{C}_3} \| \td{\psi}^n \|_2  \le |q^l | e^{3 q^l \tilde{C}_3} \| \td{\psi}^n \|_2  ,  
\end{aligned} 
  \label{prelim est-3-3}   
\end{equation} 
in which~\eqref{prelim est-0-1} is recalled. This proves the inequality~\eqref{prelim est-0-2}, by taking $\tilde{C}_5 = |q^l | e^{3 q^l \tilde{C}_3}$.      

To prove the inequalities~\eqref{prelim est-0-3}, we begin with the following difference formula at each grid point: 
\begin{equation} 
    e^{q^l \check{\psi}^n_{i,j,k}} - e^{q^l \psi^n_{i,j,k}} 
     = q^l e^{ q^l \xi^{(4)}_{i,j,k}}  \td{\psi}^n_{i,j,k}  ,  \label{prelim est-4-1}   
\end{equation} 
using the intermediate value theorem, with $\xi^{(4)}_{i,j,k}$ being between $\psi^n_{i,j,k}$  and $\check{\psi}^n_{i,j,k}$. Therefore, we have
\begin{equation} 
  \| \xi^{(4)}  \|_\infty  \le \max (  \| \psi^n \|_\infty ,  \| \check{\psi}^n \|_\infty ) 
  \le \max ( \tilde{C}_3 , C_0^* ) = \tilde{C}_3 , \quad 
  \| e^{ q^l  \xi^{(4)} } \|_\infty  \le e^{|q^l | \cdot\|  \xi^{(4)} \|_\infty} \le   e^{|q^l|  \tilde{C}_3 } . 
  \label{prelim est-4-2}   
\end{equation}   
In turn, an application of discrete H\"older inequality gives 
\begin{equation} 
    \| e^{q^l \check{\psi}^n } - e^{q^l \psi^n } \|_2 
    \le  \| e^{ q_l  \xi^{(4)} } \|_\infty \cdot   \| \td{\psi}^n \|_2  
    \le  |q^l|  e^{|q^l|   \tilde{C}_3 }   \| \td{\psi}^n \|_2  ,  \label{prelim est-4-3}   
\end{equation}  
so that the first inequality in~\eqref{prelim est-0-3} is proved, by taking $\tilde{C}_6 =  |q^l|  e^{|q^l|   \tilde{C}_3 }$. 

To prove the second inequality in~\eqref{prelim est-0-3}, we recall the point-wise expansion~\eqref{prelim est-4-1}, and look at the numerical mesh cell from $(i,j,k)$ to $(i+1,j,k)$: 
\begin{equation} 
    e^{q^l \check{\psi}^n_{i,j,k}} - e^{q^l \psi^n_{i,j,k}} 
     = q^l e^{ q^l \xi^{(4)}_{i,j,k}}  \td{\psi}^n_{i,j,k}  ,  \quad 
    e^{q^l \check{\psi}^n_{i+1,j,k}} - e^{q^l \psi^n_{i+1,j,k}} 
     = q^l e^{ q^l \xi^{(4)}_{i+1,j,k}}  \td{\psi}^n_{i+1,j,k}  , \label{prelim est-5-1}   
\end{equation} 
which in turn leads to the gradient expansion in the $x$ direction:  
\begin{equation}
  D_x  ( e^{q^l \check{\psi}^n } - e^{q^l \psi^n } )_{i+\frac12, j,k} 
  = q^l e^{ q^l \xi^{(4)}_{i,j,k}}  ( D_x \td{\psi} )_{i+\frac12, j,k}  
     + \frac{ e^{q^l \xi^{(4)}_{i+1,j,k}}   - e^{ q^l \xi^{(4)}_{i,j,k}}  }{h} q^l \td{\psi}^n_{i+1,j,k} .   
\label{prelim est-5-2} 
\end{equation}   
With the $\| \cdot \|_\infty$ bound in~\eqref{prelim est-4-2}, it is straightforward to get an $\| \cdot \|_2$ estimate for the first term. For the second term, we begin with the following observation: 
\begin{equation} 
   e^{ q^l \xi^{(4)}_{i,j,k}} -  e^{ q^l \psi^n_{i,j,k}}  
   =  q^l  e^{ q^l \xi^{(5)}_{i,j,k}}  (  \xi^{(4)}_{i,j,k} - \psi^n_{i,j,k} ) , \quad 
   \mbox{with $\xi^{(5)}_{i,j,k}$ between $\xi^{(4)}_{i,j,k}$ and $\psi^n_{i,j,k}$ } . 
   \label{prelim est-5-3} 
\end{equation}  
Moreover, by the fact that $\xi^{(4)}_{i,j,k}$ is between $\psi^n_{i,j,k}$ and $\check{\psi}^n_{i,j,k}$, we conclude that 
\begin{align} 
  & 
     \| \xi^{(5)}  \|_\infty  \le \max (  \| \psi^n \|_\infty ,  \| \check{\psi}^n \|_\infty ) 
  \le \max ( \tilde{C}_3 , C_0^* ) = \tilde{C}_3 , \quad 
     | e^{ q^l \xi^{(5)}_{i,j,k}} |  \le e^{|q^l \cdot |  \| \xi^{(5)}  \|_\infty } 
     \le e^{|q^l | \tilde{C}_3} ,   \label{prelim est-5-4-1}      
\\
  & 
  | \xi^{(4)}_{i,j,k} - \psi^n_{i,j,k}  | \le | \check{\psi}^n_{i,j,k} - \psi^n_{i,j,k}  |     
  = | \td{\psi}^n_{i,j,k}  |   ,   \label{prelim est-5-4-2}  
\\
  & 
  \mbox{so that} \quad 
  | e^{ q^l \xi^{(4)}_{i,j,k}} -  e^{ q^l \psi^n_{i,j,k}}  | 
  \le C  e^{|q^l| \tilde{C}_3}   ( \dt^\frac{15}{8} + h^\frac{15}{8} ) , \label{prelim est-5-4-3}   
\end{align}      
in which the a-priori $\| \cdot \|_\infty$ estimate~\eqref{a priori-6-1} has been applied. Using similar arguments, the following estimate is also available: 
\begin{equation} 
  | e^{ q^l \xi^{(4)}_{i+1,j,k}} -  e^{ q^l \psi^n_{i+1,j,k}}  | 
  \le C  e^{|q^l | \tilde{C}_3}   ( \dt^\frac{15}{8} + h^\frac{15}{8} ) . \label{prelim est-5-4-4}   
\end{equation}  
Then we arrive at 
\begin{equation} 
\begin{aligned} 
  \Big|  \frac{ e^{q^l \xi^{(4)}_{i+1,j,k}}   - e^{ q^l \xi^{(4)}_{i,j,k}}  }{h}   \Big|  
  \le  &   \Big|  \frac{ e^{q^l \psi^n_{i+1,j,k}}   - e^{ q^l \psi^n_{i,j,k}}  }{h}   \Big|   
   + \frac{C  e^{|q^l| \tilde{C}_3}   ( \dt^\frac{15}{8} + h^\frac{15}{8} )}{h}  
\\
  \le & 
  \| \nabla_h  (e^{q^l \psi^n }) \|_\infty + \frac14 \le \tilde{C}_4 + \frac14, 
\end{aligned}   
   \label{prelim est-5-5} 
\end{equation} 
under the linear refinement constraint $C_1 h \le \dt \le C_2 h$, and the bound~\eqref{prelim est-0-1} has been recalled in the last step. Since this inequality is valid at any numerical mesh, we get 
\begin{equation} 
  \| D_x ( e^{q^l \xi^{(4)} } ) \|_\infty \le \tilde{C}_4 + \frac14 .   
   \label{prelim est-5-6} 
\end{equation}   
Therefore, an application of discrete H\"older inequality to the expansion formula~\eqref{prelim est-5-2} indicates that 
\begin{equation}
\begin{aligned} 
  \| D_x  ( e^{q^l \cdot \check{\psi}^n } - e^{q^l \psi^n } ) \|_2 
   \le  & |q^l |\cdot \| e^{ q^l \xi^{(4)} } \|_\infty \cdot \| D_x \td{\psi} \|_2  
     + |q^l | \cdot \| D_x ( e^{q^l \xi^{(4)} } ) \|_\infty \cdot \|  \td{\psi}^n \|_2  
\\
  \le & 
      |q^l |\cdot   e^{|q^l| \tilde{C}_3}  \| D_x \td{\psi} \|_2  
     + |q^l | ( \tilde{C}_4 + \frac14 )  \|  \td{\psi}^n \|_2 .       
\end{aligned} 
\label{prelim est-5-7} 
\end{equation}   
Similar estimates could be derived for the discrete gradient in the $y$ and $z$ directions, and we are able to obtain the following inequality: 
\begin{equation} 
  \| \nabla_h  ( e^{q^l \check{\psi}^n } - e^{q^l \psi^n } ) \|_2 
  \le  |q^l | e^{|q^l | \tilde{C}_3}  \| \nabla_h \td{\psi} \|_2  
     + \sqrt{3}  |q^l |  ( \tilde{C}_4 + \frac14 )  \|  \td{\psi}^n \|_2 .       
\label{prelim est-5-8} 
\end{equation}   
As a result, the second inequality of~\eqref{prelim est-0-3} is established, by taking $\tilde{C}_7 =  |q^l |\max( e^{|q^l| \tilde{C}_3} , \sqrt{3} ( \tilde{C}_4 + \frac14 ) )$. This finishes the proof of Proposition~\ref{prop: prelim est}.  \qed 
\end{proof}

Now we proceed with the error estimate. Taking a discrete inner product with by $2\td{c}^{l,n+1}$ leads to
\begin{equation} \label{convergence-1}
\begin{aligned} 
  & 
\frac{1}{\Delta t}\left(  \| \td{c}^{l,n+1} \|^2_2 - \|\td{c}^{l,n} \|^2_2 + \|\td{c}^{l,n+1}-\td{c}^{l,n} \|^2_2 \right) + 2 \Big\langle e^{-q^l\psi^n} \nabla_h (\td{c}^{l,n+1}e^{q^l\psi^n}),  \nabla_h\td{c}^{l,n+1} \Big\rangle \\
=&- 2 \Big\langle (e^{-q^l\check{\psi}^n}-e^{-q^l\psi^n})\nabla_h(\check{c}^{l,n+1}e^{q^l\check{\psi}^n}),  \nabla_h\td{c}^{l,n+1} \Big\rangle \\
&- 2 \Big\langle e^{-q^l \psi^n} \nabla_h (\check{c}^{l,n+1} (e^{q^l\check{\psi}^n} - e^{q^l\psi^n} ) ),  \nabla_h \td{c}^{l,n+1} \Big\rangle + 2 \langle \tau^{l,n+1}, \td{c}^{l,n+1} \rangle .
\end{aligned}
\end{equation}
 The bound for the local truncation error term is straightforward: 
\begin{equation} \label{convergence-2}
2 \langle \tau^{l,n+1},\td{c}^{l,n+1} \rangle \leq \|\tau^{l,n+1} \|_2^2 + \|\td{c}^{l,n+1} \|_2^2.
\end{equation}
For the nonlinear term on the left hand side, we examine a single numerical mesh cell, from $(i,j,k)$ to $(i+1,j,k)$, and observe the following expansion identity: 
\begin{equation} 
  D_x ( f g )_{i, j, k} =  (A_x f)_{i,j,k}  ( D_x g )_{i,j,k}  
  +  (A_x g)_{i,j,k}  ( D_x f )_{i,j,k}  ,  \, \, \, 
  (A_x f)_{i,j,k}  =  \frac12 ( f_{i,j,k}   +  f_{i+1,j,k}  ) . 
  \label{expansion-1} 
\end{equation} 
Thus, we get 
\begin{align} 
  & 
  D_x (\td{c}^{l,n+1} e^{q^l\psi^n}) 
  = A_x \td{c}^{l,n+1} \cdot  D_x e^{q^l\psi^n}   
  +  A_x e^{q^l \psi^n}  \cdot D_x  \td{c}^{l,n+1} ,  \label{convergence-3-1} 
\\
  & 
  e^{-q^l \psi^n} D_x (\td{c}^{l,n+1} e^{q^l \psi^n})  
  = A_x \td{c}^{l,n+1} \cdot  D_x e^{q^l \psi^n}  \cdot e^{-q^l \psi^n}  
  +   D_x  \td{c}^{l,n+1}  ,  \label{convergence-3-2} 
\end{align}   
in which the following point-wise identity has been applied in the derivation: 
\begin{equation} 
   e^{-q^l \psi^n_{i+\frac12, j, k} }  \cdot ( A_x e^{q^l \psi^n} )_{i+\frac12, j,k} 
   =  \Big( \frac{e^{ q^l \psi^n_{i+1,j,k}}+e^{ q^l \psi^n_{i,j,k}}}{2}  \Big)^{-1} 
   \cdot  \frac{e^{ q^l \psi^n_{i+1,j,k}}+e^{ q^l \psi^n_{i,j,k}}}{2} = 1 , \, \, \, 
   \mbox{(by~\eqref{mean-1})} .  \label{convergence-3-3} 
\end{equation}   
Then we arrive at 
\begin{equation} 
\begin{aligned} 
  & 
   \Big\langle e^{-q^l\psi^n} D_x (\td{c}^{l,n+1} e^{q^l\psi^n}),  D_x \td{c}^{l,n+1} \Big\rangle   
\\
  = & 
    \|  D_x \td{c}^{l,n+1} \|_2^2 
   +  \Big\langle A_x \td{c}^{l,n+1}   ( D_x e^{q^l \psi^n}  ) e^{-q^l \psi^n}  , 
    D_x \td{c}^{l,n+1} \Big\rangle   
\\
  \ge & 
    \|  D_x \td{c}^{l,n+1} \|_2^2 
   -  \| A_x \td{c}^{l,n+1}  \|_2 \cdot \| D_x e^{q^l \psi^n}  \|_\infty \cdot \| e^{-q^l \psi^n}  \|_\infty  
    \cdot \| D_x \td{c}^{l,n+1} \|_2    
\\
  \ge & 
   \|  D_x \td{c}^{l,n+1} \|_2^2 
   -  \tilde{C}_4^2 \| \td{c}^{l,n+1}  \|_2 \cdot \| D_x \td{c}^{l,n+1} \|_2  ,     
\end{aligned}  
  \label{convergence-4-1}    
\end{equation} 
in which the preliminary $\| \cdot \|_\infty$ bound~\eqref{prelim est-0-1} (in Proposition~\ref{prop: prelim est}) has been applied in the last step. Similar inequalities could be derived in the $y$ and $z$ directions, respectively: 
\begin{align} 
  & 
   \Big\langle e^{-q^l \psi^n} D_y (\td{c}^{l,n+1} e^{q^l \psi^n}),  D_y \td{c}^{l,n+1} \Big\rangle   
  \ge \|  D_y \td{c}^{l,n+1} \|_2^2 
   -  \tilde{C}_4^2 \| \td{c}^{l,n+1}  \|_2 \cdot \| D_y \td{c}^{l,n+1} \|_2  , 
  \label{convergence-4-2}    
\\
  &  
   \Big\langle e^{-q^l \psi^n} D_z (\td{c}^{l,n+1} e^{q^l \psi^n}),  D_z \td{c}^{l,n+1} \Big\rangle   
  \ge \|  D_z \td{c}^{l,n+1} \|_2^2 
   -  \tilde{C}_4^2 \| \td{c}^{l,n+1}  \|_2 \cdot \| D_z \td{c}^{l,n+1} \|_2  .  
  \label{convergence-4-3}    
\end{align} 
As a result, the following estimate becomes available for the nonlinear inner product on the left hand side:  
\begin{equation} 
\begin{aligned} 
  & 
   \Big\langle e^{-q^l\psi^n} \nabla_h (\td{c}^{l,n+1} e^{q^l\psi^n}),  \nabla_h \td{c}^{l,n+1} \Big\rangle   
\\
  \ge & 
   \|  \nabla_h \td{c}^{l,n+1} \|_2^2 
   -  \tilde{C}_4^2 \| \td{c}^{l,n+1}  \|_2 \cdot ( \| D_x \td{c}^{l,n+1} \|_2  
   + \| D_y \td{c}^{l,n+1} \|_2 + \| D_z \td{c}^{l,n+1} \|_2 ) 
\\
   \ge & 
     \|  \nabla_h \td{c}^{l,n+1} \|_2^2 
   -  \sqrt{3} \tilde{C}_4^2 \| \td{c}^{l,n+1}  \|_2 \cdot  \| \nabla_h \td{c}^{l,n+1} \|_2 
\\
   \ge & 
     \|  \nabla_h \td{c}^{l,n+1} \|_2^2 
   -  \frac14 \| \nabla_h \td{c}^{l,n+1} \|_2^2 - 3 \tilde{C}_4^4 \| \td{c}^{l,n+1}  \|_2^2  
   = \frac34 \|  \nabla_h \td{c}^{l,n+1} \|_2^2 
    - 3 \tilde{C}_4^4 \| \td{c}^{l,n+1}  \|_2^2  .      
\end{aligned}  
  \label{convergence-4-4}    
\end{equation} 

The first nonlinear inner product term on the right hand side could be analyzed with the help of the $W_h^{1,\infty}$ regularity assumption~\eqref{assumption:W1-infty bound-2}: 
\begin{equation} 
\begin{aligned} 
  & 
    - \Big\langle (e^{-q^l\check{\psi}^n} - e^{-q^l\psi^n} ) \nabla_h (\check{c}^{l,n+1} e^{q^l\check{\psi}^n} ) ,  \nabla_h \td{c}^{l,n+1} \Big\rangle  
\\
 &  \le 
  \| \nabla_h (\check{c}^{l,n+1} e^{q^l\check{\psi}^n} )  \|_\infty 
  \cdot \| e^{-q^l\check{\psi}^n} - e^{-q^l\psi^n}  \|_2 
  \cdot  \| \nabla_h \td{c}^{l,n+1} \|_2 
\\
 &  \le 
  C_0^* \cdot \tilde{C}_5 \| \td{\psi}^n \|_2   
  \cdot  \| \nabla_h \td{c}^{l,n+1} \|_2  
  \le \frac14 \| \nabla_h \td{c}^{l,n+1} \|_2^2     
  + ( C_0^* )^2 \tilde{C}_5^2 \| \td{\psi}^n \|_2^2 ,   \label{convergence-5}    
\end{aligned} 
\end{equation} 
in which the preliminary $\| \cdot \|_2$ estimates~\eqref{prelim est-0-2} (in Proposition~\ref{prop: prelim est}) has been recalled.   

To analyze the second nonlinear inner product term on the right hand side, we make use of the expansion identity~\eqref{expansion-1} and observe that 
\begin{align} 
  D_x (\check{c}^{l,n+1} (e^{q^l \check{\psi}^n} - e^{q^l \psi^n} ) )  
  = A_x \check{c}^{l,n+1} \cdot  D_x (e^{q^l \check{\psi}^n} - e^{q^l \psi^n} )   
  +  A_x (e^{q^l \check{\psi}^n} - e^{q^l \psi^n} )  
  \cdot D_x  \check{c}^{l,n+1} .  \label{convergence-6-1} 
\end{align}   
This in turn implies that 
\begin{equation} 
\begin{aligned} 
  & 
  - \Big\langle e^{-q^l \psi^n} D_x (\check{c}^{l,n+1} (e^{q^l \check{\psi}^n} - e^{q^l\psi^n} ) ),  
    D_x \td{c}^{l,n+1} \Big\rangle   
\\
  & =  
   \Big\langle e^{-q^l \psi^n} 
   \Big( A_x \check{c}^{l,n+1} \cdot  D_x (e^{q^l \check{\psi}^n} - e^{q^l \psi^n} )   
  +  A_x (e^{q^l \check{\psi}^n} - e^{q^l \psi^n} ) \cdot D_x  \check{c}^{l,n+1}  \Big)  ,  
    D_x \td{c}^{l,n+1} \Big\rangle   
\\
  & \le  
   \| e^{-q^l \psi^n} \|_\infty 
   \cdot \| \check{c}^{l,n+1} \|_\infty \cdot  \| D_x (e^{q^l \check{\psi}^n} - e^{q^l \psi^n} ) \|_2    
   \cdot \| D_x \td{c}^{l,n+1} \|_2   
\\
    & 
    \quad+ \| e^{-q^l \psi^n} \|_\infty  \cdot \| D_x  \check{c}^{l,n+1}  \|_\infty  
   \cdot \| e^{q^l \check{\psi}^n} - e^{q^l \psi^n} \|_2    
   \cdot \| D_x \td{c}^{l,n+1} \|_2 
\\
  & \le  
  \tilde{C}_4 C_0^* \Big( \tilde{C}_6 \| \td{\psi}^n \|_2 
  + \tilde{C}_7 ( \| \td{\psi}^n \|_2  + \| \nabla_h \td{\psi}^n \|_2 ) \Big)  
   \| D_x \td{c}^{l,n+1} \|_2  
\\
  &  \le  
  \frac14 \| D_x \td{c}^{l,n+1} \|_2^2      
  + 2 \tilde{C}_4^2 (C_0^*)^2 \Big(  ( \tilde{C}_6 + \tilde{C}_7 )^2 \| \td{\psi}^n \|_2^2  
  + \tilde{C}_7^2 \| \nabla_h \td{\psi}^n \|_2^2 \Big) ,           
\end{aligned}  
  \label{convergence-6-2}   
\end{equation}   
in which the preliminary estimates~\eqref{prelim est-0-1}, \eqref{prelim est-0-3} (in Proposition~\ref{prop: prelim est}), as well as the regularity assumption~\eqref{r:eq1}, have been extensively used. The inequalities in the $y$ and $z$ directions could be derived in a similar manner. Then we are able to obtain 
\begin{equation} 
\begin{aligned} 
  & 
  - \Big\langle e^{-q^l \psi^n} \nabla_h (\check{c}^{l,n+1} (e^{q^l \check{\psi}^n} - e^{q^l\psi^n} ) ),  
    \nabla_h \td{c}^{l,n+1} \Big\rangle   
\\
  \le & 
  \frac14 \| \nabla_h \td{c}^{l,n+1} \|_2^2      
  + 6 \tilde{C}_4^2 (C_0^*)^2 \Big(  ( \tilde{C}_6 + \tilde{C}_7 )^2 \| \td{\psi}^n \|_2^2  
  + \tilde{C}_7^2 \| \nabla_h \td{\psi}^n \|_2^2 \Big) .            
\end{aligned}  
  \label{convergence-6-3}   
\end{equation}   

Finally, a substitution of \reff{convergence-2}, \reff{convergence-4-4}, \reff{convergence-5} and \reff{convergence-6-3} into \eqref{convergence-1} leads to
\begin{equation}
\begin{aligned}
 \frac{1}{\dt} ( \|\td{c}^{l,n+1} \|^2_2- \|\td{c}^{l,n} \|_2^2 ) + \frac12 \| \nabla_h \td{c}^{l,n+1} \|_2^2
\leq& \| \tau^{l,n+1}  \|_2^2+ ( 6 \tilde{C}_4^4 + 1 ) \|\td{c}^{l,n+1} \|_2^2 \\
&+ \tilde{C}_8 \| \td{\psi}^n \|_2^2 + \tilde{C}_9 \| \nabla_h \td{\psi}^n \|_2^2 , 
\end{aligned} 
  \label{convergence-7-1} 
\end{equation}
with $\tilde{C}_8 = (C_0^*)^2 \Big( 2 \tilde{C}_5^2 + 12 \tilde{C}_4^2 ( \tilde{C}_6 + \tilde{C}_7 )^2 \Big)$, $\tilde{C}_9 = 12 (C_0^*)^2 \tilde{C}_4^2 \tilde{C}_7^2$. Meanwhile, the discrete elliptic regularity inequality~\eqref{lem1:1} (in Lemma~\ref{Lem1}) indicates that 
\begin{equation} 
\begin{aligned} 
  \tilde{C}_8 \| \td{\psi}^n \|_2^2 + \tilde{C}_9 \| \nabla_h \td{\psi}^n \|_2^2 
  \le & C ( \tilde{C}_8  + \tilde{C}_9  )  \| \Delta_h \td{\psi}^n \|_2^2 
  \le C ( \tilde{C}_8  + \tilde{C}_9  )  \Big( \sum_{l=1}^M q^l \| \td{c}^{l,n} \|_2 \Big)^2  
\\
  \le & 
  C M ( \tilde{C}_8  + \tilde{C}_9  )  \sum_{l=1}^M ( q^l )^2 \| \td{c}^{l,n} \|_2^2 . 
\end{aligned}  
   \label{convergence-7-2}    
\end{equation}   
Its substitution into~\eqref{convergence-7-1} yields 
\begin{equation}
\begin{aligned}
 \frac{1}{\dt} ( \|\td{c}^{l,n+1} \|^2_2- \|\td{c}^{l,n} \|_2^2 ) + \frac12 \| \nabla_h \td{c}^{l,n+1} \|_2^2
\leq& \| \tau^{l,n+1}  \|_2^2+ ( 6 \tilde{C}_4^4 + 1 ) \|\td{c}^{l,n+1} \|_2^2 \\
&  + C M ( \tilde{C}_8  + \tilde{C}_9  )  \sum_{l=1}^M ( q^l )^2 \| \td{c}^{l,n} \|_2^2 . 
\end{aligned} 
  \label{convergence-7-3} 
\end{equation}    
Subsequently, a summation over all ionic species gives
\begin{equation}
\begin{aligned} 
  & 
\sum_{l=1}^M \Big( \frac{1}{\dt} ( \|\td{c}^{l,n+1} \|^2_2 - \|\td{c}^{l,n} \|_2^2 ) 
 + \frac12 \|\nabla_h \td{c}^{l,n+1} \|_2^2 \Big) 
 \\
\le & \sum_{l=1}^M \Big( \|\tau^{l,n+1} \|_2^2 + ( 6 \tilde{C}_4^4 + 1 )
 \|\td{c}^{l,n+1} \|_2^2 \Big) 
   + C M^2 ( \tilde{C}_8  + \tilde{C}_9  )  \sum_{l=1}^M ( q^l )^2 \| \td{c}^{l,n} \|_2^2 .
\end{aligned}
\end{equation}
An application of discrete Gronwall inequality leads to 
\begin{equation}\label{th:eq6}
\sum_{l=1}^M \|\td{c}^{l,n+1} \|_2+ \Big( \dt \sum_{l=1}^M \sum_{k=1}^{n+1} \|\nabla_h \td{c}^{l,k} \|_2^2 \Big)^{\frac{1}{2}} \le \hat{C} (\Delta t^2+h^2),
\end{equation}
based on the higher order truncation error accuracy, $\|\tau^{l,n+1} \|_2\leq C ( \dt^2+h^2),~l=1,\cdots,M$.

With this higher order error estimate, we notice that the a-priori assumption~\eqref{a priori-1} is satisfied at the next time step $t^{n+1}$:
\[
\| \td{c}^{l,n+1} \|_2\le \hat{C} (\Delta t^2+h^2)\leq \Delta t^{\frac{15}{8}}+h^{\frac{15}{8}}, \quad 
1 \le l \le M , 
\]
provided that $\dt$ and $h$ are sufficiently small. Therefore, an induction analysis could be applied. This finishes the higher order convergence analysis.

As a result, the convergence estimate \reff{th:eq1} for the variable $(c^1,\cdots,c^M)$ is a direct consequence of \reff{th:eq6}, combined with the definition \reff{s1:eq0} of the constructed approximate solution $(c^l_N, c^2_N,\cdots,c^M_N)$, as well as the projection estimate \reff{ca:eq1}.

In terms of the convergence estimate for the electric potential $\psi$, we recall the definition for $\td{\psi}^{n+1}$ and observe that
\[
\big\|\td{\psi}^{n+1} \big\|_{H^2_h}\leq C \|\Delta_h\td{\psi}^{n+1} \|_2 \le \frac{C}{\kappa}\sum_{l=1}^M|q^l|\big\| \td{c}^{l,n+1}\big\|_2 \le \hat{C}_2 (\Delta t^2+h^2),
\]
in which the discrete elliptic regularity estimate~\eqref{lem1:1} (in Lemma~\ref{Lem1}) has been applied in the first step. Meanwhile, the following inequality is available: 
\begin{align*} 
  & 
 \|\td{\psi}^{n+1}-e^{n+1}_{\psi} \|_{H^2_h} \le C \|\Delta_h(\td{\psi}^{n+1}-e^{n+1}_{\psi}) \|_2 \le  \hat{C}_3 \dt ,
\\
  & 
  \mbox{since} \quad 
(-\Delta_h)(\td{\psi}^{n+1}-e^{n+1}_{\psi})=\sum_{l=1}^Mq^l(\check{c}^{l,n+1}-c^{l,n+1}_N)=\sum_{l=1}^Mq^l(\Delta t\mathcal{P}_N c^{l,n+1}_{\Delta t}).
\end{align*} 
Finally, we arrive at 
\begin{equation}
\big\|e^{n+1}_{\psi} \big\|_{H^2_h}\leq \big\|\td{\psi}^{n+1} \big\|_{H^2_h}+\big\|\td{\psi}^{n+1}-e^{n+1}_{\psi} \big\|_{H^2_h}\leq \hat{C}_4 (\Delta t+h^2).
\end{equation}
This completes the proof of Theorem \ref{Th:convergence}. \qed 

\subsection{Theoretical justification of energy dissipation} 
By the estimate~\reff{th:eq6}, the $\| \cdot \|_\infty$ error bound for the concentration variable at the next time step is available:
	\begin{eqnarray} 
 \| \tilde{c}^{l,n+1} \|_\infty  \le \frac{C \| \tilde{c}^{l,n+1} \|_2}{h^\frac32} \le   \frac{C ( \dt^\frac{15}{8} + h^\frac{15}{8} ) }{h^\frac32} \le  C ( \dt^\frac{3}{8} + h^\frac{3}{8} ) \le \frac14 , \quad 
  l = 1, 2, \cdots, M . 
  	\label{a priori-8}    
	\end{eqnarray}
Again, with the help of the regularity assumption~\eqref{r:eq1}, an $\ell^\infty$ bound for the numerical solution can be derived at the next time step: 
	\begin{eqnarray} 
\| c^{l,n+1} \|_\infty \le  \| \check{c}^{l,n+1}  \|_\infty +  \| \tilde{c}^{l,n+1} \|_\infty \le \tilde{C}_3  , \quad  l = 1, 2, \cdots, M .   
	\label{a priori-9} 
	\end{eqnarray}         

Therefore, with the $\| \cdot \|_\infty$ bounds~\eqref{a priori-9} and \eqref{a priori-7} for the numerical solutions, $c^{l,n+1}$ (evaluated at the next time step) and $\nabla_h \psi^n$ (evaluated at the previous time step), a lower bound of $\tau^*$ in~\eqref{bound-dt-1} can be established:
\[
\tau^*\geq \tau^*_{\rm min}:= \frac{\kappa}{ \tilde{C}_3  \sum_{l=1}^M |q^l|^2 } e^{-|q^l| h \tilde{C}_3}. 
\]
As a result, the proof of Theorem~\ref{theorem:energy} can be theoretically justified, as long as the time step size satisfies $0 < \dt \le \tau^*_{\rm min}$.

\subsection{Error analysis with other mobility means}  
The convergence analysis presented above focuses on the harmonic mean for the mobility average. We remark that the convergence estimate will still go through for the other options, such as the geometric mean, arithmetic mean, or entropic mean (formulated in~\eqref{mean-1}).

In fact, the asymptotic expansion~\eqref{s1:eq0} and the higher order consistency estimate~\eqref{s1:eq7} take the same form, which in turn give the same error evolutionary equation~\eqref{s1:eq8}. The a-priori assumption~\eqref{a priori-1} and the a-priori estimates~\eqref{a priori-6-1}-\eqref{a priori-7} are not affected by the choices of the mobility average. Proposition~\ref{prop: prelim est} is still valid, in which the inequalities~\eqref{prelim est-0-1} and \eqref{prelim est-0-2} are defined in a similar manner, for different options of $e^{- q^l \psi^n}$ at staggered grid points. 

The only essential difference will be the corresponding estimate~\eqref{convergence-3-3}, in which the exact identity is not valid any more. Meanwhile, the $\| \cdot \|_\infty$ bound~\eqref{a priori-7} for $\psi^n$ implies that 
\begin{equation} 
   e^{-q^l \psi^n_{i+\frac12, j, k} }  \cdot ( A_x e^{q^l \psi^n} )_{i+\frac12, j,k} 
   \ge B_0  > 0 ,  \quad \mbox{$B_0$ dependent on $\tilde{C}_3$} . \label{convergence-3-3-b} 
\end{equation}  
In turn, the corresponding estimates in~\eqref{convergence-4-1} and \eqref{convergence-4-4} become
\begin{equation} 
\begin{aligned} 
  & 
   \Big\langle e^{-q^l\psi^n} D_x (\td{c}^{l,n+1} e^{q^l\psi^n}),  D_x \td{c}^{l,n+1} \Big\rangle   
\\
  = & 
    B_0 \|  D_x \td{c}^{l,n+1} \|_2^2 
   +  \Big\langle A_x \td{c}^{l,n+1}   ( D_x e^{q^l \psi^n}  ) e^{-q^l \psi^n}  , 
    D_x \td{c}^{l,n+1} \Big\rangle   
\\
  \ge & 
    B_0 \|  D_x \td{c}^{l,n+1} \|_2^2 
   -  \| A_x \td{c}^{l,n+1}  \|_2 \cdot \| D_x e^{q^l \psi^n}  \|_\infty \cdot \| e^{-q^l \psi^n}  \|_\infty  
    \cdot \| D_x \td{c}^{l,n+1} \|_2    
\\
  \ge & 
   B_0 \|  D_x \td{c}^{l,n+1} \|_2^2 
   -  \tilde{C}_4^2 \| \td{c}^{l,n+1}  \|_2 \cdot \| D_x \td{c}^{l,n+1} \|_2  ,     
\end{aligned}  
  \label{convergence-4-1-b}    
\end{equation} 
\begin{equation} 
\begin{aligned} 
  & 
   \Big\langle e^{-q^l\psi^n} \nabla_h (\td{c}^{l,n+1} e^{q^l\psi^n}),  \nabla_h \td{c}^{l,n+1} \Big\rangle   
\\
  \ge & 
   B_0 \|  \nabla_h \td{c}^{l,n+1} \|_2^2 
   -  \tilde{C}_4^2 \| \td{c}^{l,n+1}  \|_2 \cdot ( \| D_x \td{c}^{l,n+1} \|_2  
   + \| D_y \td{c}^{l,n+1} \|_2 + \| D_z \td{c}^{l,n+1} \|_2 ) 
\\
   \ge & 
     B_0 \|  \nabla_h \td{c}^{l,n+1} \|_2^2 
   -  \sqrt{3} \tilde{C}_4^2 \| \td{c}^{l,n+1}  \|_2 \cdot  \| \nabla_h \td{c}^{l,n+1} \|_2 
\\
   \ge & 
      B_0 \|  \nabla_h \td{c}^{l,n+1} \|_2^2  - \frac14 B_0  \| \nabla_h \td{c}^{l,n+1} \|_2^2 
     - 3 \tilde{C}_4^4 B_0^{-1} \| \td{c}^{l,n+1}  \|_2^2  
\\
  = & 
    \frac34 B_0 \|  \nabla_h \td{c}^{l,n+1} \|_2^2 
    - 3 \tilde{C}_4^4 B_0^{-1} \| \td{c}^{l,n+1}  \|_2^2  .      
\end{aligned}  
  \label{convergence-4-4-b}    
\end{equation} 

Similarly, the estimates in~\eqref{convergence-5} and \eqref{convergence-6-3} could be derived as 
\begin{equation} 
\begin{aligned} 
  & 
    - \Big\langle (e^{-q^l\check{\psi}^n} - e^{-q^l\psi^n} ) \nabla_h (\check{c}^{l,n+1} e^{q^l\check{\psi}^n} ) ,  \nabla_h \td{c}^{l,n+1} \Big\rangle  
\\
  \le & 
  \| \nabla_h (\check{c}^{l,n+1} e^{q^l\check{\psi}^n} )  \|_\infty 
  \cdot \| e^{-q^l\check{\psi}^n} - e^{-q^l\psi^n}  \|_2 
  \cdot  \| \nabla_h \td{c}^{l,n+1} \|_2 
\\
  \le & 
  C_0^* \cdot \tilde{C}_5 \| \td{\psi}^n \|_2   
  \cdot  \| \nabla_h \td{c}^{l,n+1} \|_2  
  \le \frac14 B_0 \| \nabla_h \td{c}^{l,n+1} \|_2^2     
  + ( C_0^* )^2 \tilde{C}_5^2 B_0^{-1} \| \td{\psi}^n \|_2^2  ,  \label{convergence-5-b}    
\end{aligned} 
\end{equation} 
and
\begin{equation} 
\begin{aligned} 
  & 
  - \Big\langle e^{-q^l \psi^n} \nabla_h (\check{c}^{l,n+1} (e^{q^l \check{\psi}^n} - e^{q^l\psi^n} ) ),  
    \nabla_h \td{c}^{l,n+1} \Big\rangle   
\\
  \le & 
  \frac14 B_0 \| \nabla_h \td{c}^{l,n+1} \|_2^2      
  + 6 \tilde{C}_4^2 (C_0^*)^2 B_0^{-1} \Big(  ( \tilde{C}_6 + \tilde{C}_7 )^2 \| \td{\psi}^n \|_2^2  
  + \tilde{C}_7^2 \| \nabla_h \td{\psi}^n \|_2^2 \Big),            
\end{aligned}  
  \label{convergence-6-3-b}   
\end{equation}   
respectively.

Analogous to~\eqref{convergence-7-1}, we arrive at  
\begin{equation}
\begin{aligned}
 \frac{1}{\dt} ( \|\td{c}^{l,n+1} \|^2_2- \|\td{c}^{l,n} \|_2^2 ) + \frac12 B_0 \| \nabla_h \td{c}^{l,n+1} \|_2^2 \leq& \| \tau^{l,n+1}  \|_2^2+ ( 6 \tilde{C}_4^4 B_0^{-1} + 1 ) \|\td{c}^{l,n+1} \|_2^2 \\
&+ \tilde{C}_8 \| \td{\psi}^n \|_2^2 + \tilde{C}_9 \| \nabla_h \td{\psi}^n \|_2^2 , 
\end{aligned} 
  \label{convergence-7-1-b} 
\end{equation}
with $\tilde{C}_8 = (C_0^*)^2 B_0^{-1} \Big( 2 \tilde{C}_5^2 + 12 \tilde{C}_4^2 ( \tilde{C}_6 + \tilde{C}_7 )^2 \Big)$, $\tilde{C}_9 = 12 (C_0^*)^2 \tilde{C}_4^2 \tilde{C}_7^2 B_0^{-1}$. With similar arguments, we are able to establish the convergence estimate~\eqref{th:eq6}. The technical details are left to interested readers.

\subsection{Convergence estimate on fluxes} 
It is of practical significance to accurately approximate the ionic fluxes in many applications. For instance, the accuracy of ionic fluxes through a transmembrane channel is extremely important to the calculation of the current-voltage characteristic curves. We here consider the convergence estimate of the flux vector, defined as $J^l := \nabla c^l + q^l c^ l\nabla\psi$, $1 \le l \le M$. In more detail, we denote the following profiles: 
\begin{equation} 
\begin{aligned}
  & 
  J_e^l =  \nabla c_e^l + q^l c_e^l \nabla \psi_e ,   \, \, \, \mbox{(the exact profile)} ,   
\\
  & 
   J_N^l =  \nabla_h c_N^l + q^l c_N^l \nabla_h \psi_N ,   \, \,\, \mbox{(the approximate profile)}  ,   
\\
  & 
  J^l =  \nabla_h c^l + q^l c^l \nabla_h \psi ,   \, \, \, \mbox{(the numerical profile)}  . 
\end{aligned} 
  \label{error-flux-1} 
\end{equation} 
By the projection estimate~\eqref{projection-ets-1}, combined with the standard truncation error estimate for the finite-difference spatial discretization, we get 
\begin{align}
  \| J_e^l - J_N^l  \|_2 \le C h^2 .  \label{error-flux-2} 
\end{align} 
In turn, the error grid function for the flux vector is defined as
\begin{equation}\label{error function-2}
e^{n}_{J^l}:= J^{l, n}_N - J^{l, n}  ,  \quad l=1,\cdots,M, \, \, \,  n\in\mathbb{N}^*.
\end{equation} 

An $\ell^2 (0, T; \ell^2)$ error estimate can be established for the flux vector. 
\begin{corollary}  \label{cor:convergence-flux}
Let $e^{n}_{J^l}$ be the error grid functions defined in~\eqref{error function-2}.  Then, under the linear refinement requirement $C_1h\leq \Delta t\leq C_2h$, the following convergence result is available as $\dt, h \to 0$: 
\begin{equation} \label{convergence-flux-0}
  \Big( \Delta t \sum_{l=1}^M \sum_{k=1}^{n} \| e^{k}_{J^l} \|^2_2 \Big)^{\frac{1}{2}} \le C(\Delta t+h^2),
\end{equation}
where the constant $C>0$ is independent of $\dt$ and $h$. 
\end{corollary}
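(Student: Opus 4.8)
The plan is to obtain the flux estimate as a direct consequence of Theorem~\ref{Th:convergence}, since the flux is a first-order differential combination of the concentration and potential, for which error bounds are already in hand. First I would decompose the flux error at the discrete level: using the definitions $e^{l,n} = \mathcal{P}_h c^{l,n}_N - c^{l,n}$ and $e^n_\psi = \mathcal{P}_h \psi^n_N - \psi^n$, and inserting the intermediate term $c^{l,n}_N \nabla_h \psi^n$, one has the identity
\begin{equation*}
c^{l,n}_N \nabla_h \psi^n_N - c^{l,n} \nabla_h \psi^n = c^{l,n}_N \nabla_h e^n_\psi + e^{l,n} \nabla_h \psi^n ,
\end{equation*}
so that, together with $\nabla_h c^{l,n}_N - \nabla_h c^{l,n} = \nabla_h e^{l,n}$, one obtains
\begin{equation*}
e^n_{J^l} = \nabla_h e^{l,n} + q^l c^{l,n}_N \nabla_h e^n_\psi + q^l e^{l,n} \nabla_h \psi^n .
\end{equation*}

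Next I would take $\| \cdot \|_2$ of this identity and apply the discrete H\"older inequality, pulling out one factor of each product in $\| \cdot \|_\infty$:
\begin{equation*}
\| e^n_{J^l} \|_2 \le \| \nabla_h e^{l,n} \|_2 + |q^l| \, \| c^{l,n}_N \|_\infty \, \| \nabla_h e^n_\psi \|_2 + |q^l| \, \| e^{l,n} \|_2 \, \| \nabla_h \psi^n \|_\infty .
\end{equation*}
Here $\| c^{l,n}_N \|_\infty$ is bounded by the regularity of the exact solution, and $\| \nabla_h \psi^n \|_\infty \le \tilde{C}_3$ is the a-priori $W^{1,\infty}_h$ bound~\eqref{a priori-7} already established inside the induction of the convergence proof. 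Since $\| \nabla_h e^n_\psi \|_2 \le \| e^n_\psi \|_{H^2_h} \le C(\Delta t + h^2)$ and $\| e^{l,n} \|_2 \le C(\Delta t + h^2)$ hold \emph{for each fixed $n$} by~\eqref{th:eq1}, this reduces to $\| e^n_{J^l} \|_2 \le \| \nabla_h e^{l,n} \|_2 + C(\Delta t + h^2)$. Squaring, multiplying by $\Delta t$, and summing over $1 \le k \le n$ and $1 \le l \le M$ then gives
\begin{equation*}
\Delta t \sum_{l=1}^M \sum_{k=1}^n \| e^k_{J^l} \|_2^2 \le 2 \Delta t \sum_{l=1}^M \sum_{k=1}^n \| \nabla_h e^{l,k} \|_2^2 + 2 M T \, C^2 (\Delta t + h^2)^2 ,
\end{equation*}
where $n \Delta t \le T$ was used. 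The first term on the right is $O((\Delta t + h^2)^2)$ by the $\ell^2(0,T;\ell^2)$ gradient estimate in Theorem~\ref{Th:convergence}, and the second is manifestly $O((\Delta t + h^2)^2)$; taking the square root yields~\eqref{convergence-flux-0}. If one additionally wants the error measured against the exact flux $J_e^l$, a single triangle inequality with~\eqref{error-flux-2} appends an $O(h^2)$ contribution.

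The computation is elementary once the decomposition is written down, so I do not anticipate a genuine obstacle. The one delicate point is that the potential-gradient error $\| \nabla_h e^n_\psi \|_2$ must be controlled \emph{pointwise in time} rather than merely in a time-averaged norm, which is exactly why it is convenient that~\eqref{th:eq1} furnishes a pointwise $H^2_h$ bound on $e^n_\psi$. A secondary point worth care is that $\| \nabla_h \psi^n \|_\infty$ is not controlled for free — it is supplied by the a-priori estimate~\eqref{a priori-7}, legitimate only within the induction argument carried out for Theorem~\ref{Th:convergence}; thus the corollary holds under the same linear-refinement hypothesis $C_1 h \le \Delta t \le C_2 h$ and for $\Delta t, h$ sufficiently small.
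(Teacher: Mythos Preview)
Your proof is correct and follows essentially the same approach as the paper: decompose the flux error via a product-rule splitting, apply the discrete H\"older inequality with one factor in $\|\cdot\|_\infty$, and invoke the pointwise-in-time bounds on $\|e^{l,k}\|_2$, $\|e^k_\psi\|_{H^2_h}$ together with the time-summed gradient bound from Theorem~\ref{Th:convergence}. The only cosmetic difference is the choice of intermediate term in the product decomposition --- the paper writes $e^n_{J^l} = \nabla_h e^{l,n} + q^l\big(c^{l,n}\nabla_h e^n_\psi + e^{l,n}\nabla_h \psi_N^n\big)$, thereby placing the $\|\cdot\|_\infty$ burden on $c^{l,n}$ (via the a-priori bound~\eqref{a priori-9}) and on $\nabla_h\psi_N^n$ (via regularity), whereas you place it on $c^{l,n}_N$ and $\nabla_h\psi^n$; both choices are equally valid.
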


\begin{proof} 
A detailed expansion implies that 
\[
   e^{n}_{J^l} =  \nabla_h e^{l, n} + q^l ( c^{l, n} \nabla_h e_\psi^n 
   + e^{l, n} \nabla_h \psi_N^n ) , \quad l=1,\cdots,M. 
\]
Then
\begin{equation} 
\begin{aligned}  
  \| e^{n}_{J^l} \|_2 \le & \| \nabla_h e^{l, n} \|_2 + | q^l | ( \| c^{l, n} \|_\infty \cdot \| \nabla_h e_\psi^n \|_2 + \| e^{l, n} \|_2 \cdot \| \nabla_h \psi^n_N \|_\infty )   
\\
  \le & 
  \| \nabla_h e^{l, n} \|_2 + \tilde{C}_3 | q^l | ( \| \nabla_h e_\psi^n \|_2 +  \| e^{l, n} \|_2  )  , 
\end{aligned} 
\label{convergence-flux-1} 
\end{equation} 
in which the regularity assumption~\eqref{r:eq1}, the a-priori $\| \cdot \|_{W_h^\infty}$ estimate~\eqref{a priori-7}, and an estimate on $c^{l, n}$ that is derived analogously to \reff{a priori-9} have been applied. This in turn leads to 
\begin{equation} 
\begin{aligned}  
  \Big( \Delta t \sum_{l=1}^M \sum_{k=1}^{n} \| e^{k}_{J^l} \|^2_2 \Big)^{\frac{1}{2}} 
  \le & \Big( \Delta t \sum_{l=1}^M \sum_{k=1}^{n} \| \nabla_h e^{l, k} \|^2_2 \Big)^{\frac{1}{2}}   
\\
  & 
  + C | q^l | \Big( \Delta t \sum_{l=1}^M \sum_{k=1}^{n} 
  ( \| \nabla_h e_\psi^k \|^2_2 + \| e^{l, k} \|^2_2 ) \Big)^{\frac{1}{2}} . 
\end{aligned} 
   \label{convergence-flux-2} 
\end{equation}    
Moreover, with an application of the discrete Sobolev inequality~\eqref{lem1:1} (in Lemma~\ref{Lem1}) and \eqref{th:eq1} (in Theorem~\ref{Th:convergence}), we see that 
\begin{align} 
  & 
  \Big( \Delta t \sum_{l=1}^M \sum_{k=1}^{n} \| \nabla_h e^{l, k} \|^2_2 \Big)^{\frac{1}{2}}   
  \le C (\dt + h^2) ,  \quad \mbox{(by~\eqref{th:eq1})} ,  \label{convergence-flux-3-1}   
\\
  & 
   \| \nabla_h e_\psi^k \|_2 \le C \| \Delta_h e_\psi^k \|_2  \le C \| e_\psi^k \|_{H_h^2} 
   \le C (\dt + h^2) ,  \quad \mbox{(by~\eqref{lem1:1})},  \quad \mbox{so that}  \nonumber 
\\
  & 
   \Big( \Delta t \sum_{l=1}^M \sum_{k=1}^{n} 
   \| \nabla_h e_\psi^k \|^2_2  \Big)^{\frac{1}{2}}  \le C (\dt + h^2) ,   \label{convergence-flux-3-2} 
\\
  & 
   \| e^{l, k} \|_2 \le C (\dt + h^2) ,  \, \, \, \mbox{(by~\eqref{th:eq1}) } ,  \quad 
   \mbox{so that}  \, \, \,  
   \Big( \Delta t \sum_{l=1}^M \sum_{k=1}^{n} 
   \| e^{l, k} \|^2_2  \Big)^{\frac{1}{2}}  \le C (\dt + h^2) .   \label{convergence-flux-3-3}           
\end{align}       
Finally, a substitution of~\eqref{convergence-flux-3-1}-\eqref{convergence-flux-3-3} into \eqref{convergence-flux-2} yields the desired error estimate~\eqref{convergence-flux-0} for the flux vector. This finishes the proof of Corollary~\ref{cor:convergence-flux}. \qed
\end{proof}

\begin{remark} 
Since the gradient operator is involved in the flux vector, an $\ell^\infty (0, T; \ell^2)$ error estimate for $e_{J^l}$ is not directly available. On the other hand, such an optimal rate error estimate in the $\ell^\infty (0, T; \ell^2)$ norm can be established for the flux vector, if an even higher order consistency analysis is performed. The details are left to interested readers. 
\end{remark}

\section{Numerical results} \label{sec:numerical results}
Numerical simulations are conducted to demonstrate numerical accuracy of the developed numerical method and its effectiveness in preserving mass conservation, positivity, and free-energy dissipation. Unless stated otherwise, we take the characteristic concentration $c_0=1M$ and characteristic length $L=1nm$.


\subsection{Accuracy test}
Consider an electrolyte solution with binary symmetric monovalent ions. Let $\kappa=1$. To test the accuracy of the numerical method, we consider the following problem in 2D:
\begin{equation}
\left\{
\begin{aligned}
&\partial_t c^1=\nabla\cdot(\nabla c^1 +c^1 \nabla\psi)+f_1,\\
&\partial_t c^2=\nabla\cdot(\nabla c^2 -c^2 \nabla\psi)+f_2,\\
&-\kappa\Delta \psi=c^1-c^2+\rho^f,
\end{aligned}
\right.
\end{equation}
where the source terms $f_1$, $f_2$, and $\rho^f$, as well as the initial and boundary conditions, are determined by the following exact periodic solution
\begin{equation}
\left\{
\begin{aligned}
&c^1(x,y,t)=e^{-t}\cos(2\pi x)\sin(2\pi y)+2,\\
&c^2(x,y,t)=e^{-t}\cos(2\pi x)\sin(2\pi y)+2,\\
&\psi(x,y,t)=e^{-t}\cos(2\pi x)\sin(2\pi y).
\end{aligned}
\right.
\end{equation}

\begin{table}[H]
\centering
\begin{tabular}{cccccccc}
\hline\hline
&$h$ & $\ell^\infty$ error in $c^1$ & Order&$\ell^\infty$ error in $c^2$ & Order&$\ell^\infty$ error in $\psi$ & Order\\
\hline
 \multirow{4}{*}{Harmonic mean}
&$\frac{1}{50}$&2.00e-03 &- &1.80e-03 &- & 1.20e-03&-\\
&$\frac{1}{60}$&1.40e-03 &2.01 &1.20e-03 &2.01 & 8.37e-04&2.01\\
&$\frac{1}{70}$&1.00e-03 &1.99 &9.19e-04 &1.98 & 6.16e-04&1.99\\
&$\frac{1}{80}$&7.65e-04 &2.01 &7.03e-04 &2.01 & 4.71e-04&2.00\\
&$\frac{1}{90}$&6.05e-04 &2.00 &5.56e-04 &1.99& 3.73e-04&1.99\\
\hline
\multirow{4}{*}{Geometric mean}
&$\frac{1}{50}$&2.00e-03 &- &1.80e-03 &- & 1.20e-03&-\\
&$\frac{1}{60}$&1.40e-03 &2.01 &1.20e-03 &2.01 & 8.37e-04&2.01\\
&$\frac{1}{70}$&1.00e-03 &1.99 &9.19e-04 &1.98 & 6.16e-04&1.99\\
&$\frac{1}{80}$&7.65e-04 &2.01 &7.03e-04 &2.01 & 4.71e-04&2.00\\
&$\frac{1}{90}$&6.05e-04 &2.00 &5.56e-04 &1.99& 3.73e-04&1.99\\  
\hline
\multirow{4}{*}{Arithmetic mean}
&$\frac{1}{50}$&4.90e-03 &- &2.40e-03 &- & 1.10e-03&-\\
&$\frac{1}{60}$&3.40e-03 &2.01 &1.60e-03 &2.01 & 7.84e-04&2.01\\
&$\frac{1}{70}$&2.50e-03 &1.99 &1.20e-03 &1.99 & 5.77e-04&1.99\\
&$\frac{1}{80}$&1.90e-03 &2.01 &9.24e-04 &2.01 & 4.41e-04&2.00\\
&$\frac{1}{90}$&1.50e-03 &1.99 &7.30e-04 &1.99& 3.49e-04&1.99\\    
\hline
\multirow{4}{*}{Entropic mean}
&$\frac{1}{50}$&1.20e-03 &- &2.00e-03 &- & 1.20e-03&-\\
&$\frac{1}{60}$&8.04e-04 &2.02 &1.40e-03 &2.01 & 8.37e-04&2.01\\
&$\frac{1}{70}$&5.93e-04 &1.98 &1.00e-04 &1.99 & 6.03e-04&1.99\\
&$\frac{1}{80}$&4.53e-04 &2.01 &7.77e-04 &2.00 & 4.61e-04&2.00\\
&$\frac{1}{90}$&3.59e-04 &1.99 &6.15e-04 &1.99& 3.65e-04&1.99\\   
\hline\hline
\end{tabular}
\caption{Numerical error and convergence order of numerical solutions at time $T=0.1$ with a mesh ratio $\Delta t=h^2$.}
\label{t:OrderTab}
\end{table}
To probe the discretization error, we apply the proposed numerical method to solve the problem using various spatial step size $h$ with a fixed mesh ratio $\Delta t=h^2$. Table \ref{t:OrderTab} lists the $\ell^\infty$ errors and convergence orders for ionic concentration and electrostatic potential at time $T=0.1$. Obviously, we can observe that the error decreases as the mesh refines, and the convergence orders for ion concentrations and the electric potential are both perfectly about $2$. This indicates that the semi-implicit scheme~\reff{FuPNP} has expected convergence rate, i.e., first-order and second-order accurate in time and spatial discretization, respectively. Notice that the mesh ratio, $\Delta t=h^2$, adopted here is for the purpose of numerical accuracy test, not for the enforcement of the numerical stability or positivity.

\subsection{Properties test}
Consider a closed, neutral system that consists of symmetric monovalent ions with $\kappa=1e-3$. The fixed charge density
\[
\begin{aligned}
\rho^f(x,y)=&-e^{-100\left[(x-\frac{1}{4})^2+(y-\frac{1}{4})^2\right]}
+e^{-100\left[(x-\frac{1}{4})^2+(y-\frac{3}{4})^2\right]}\\
&+e^{-100\left[(x-\frac{3}{4})^2+(y-\frac{1}{4})^2\right]}
-e^{-100\left[(x-\frac{3}{4})^2+(y-\frac{3}{4})^2\right]}
\end{aligned}
\]
is prescribed to approximate two positive and two negative point charges located in four quadrants, using Gaussian functions with small local supports. The initial distributions are given by 
\begin{equation}\label{InitCons}
\begin{aligned}
c^1(x,y, 0)=0.1~~\mbox{and}~~c^2(x,y, 0)=0.1.
\end{aligned}
\end{equation}

\begin{figure}[H]
\centering
\includegraphics[scale=.65]{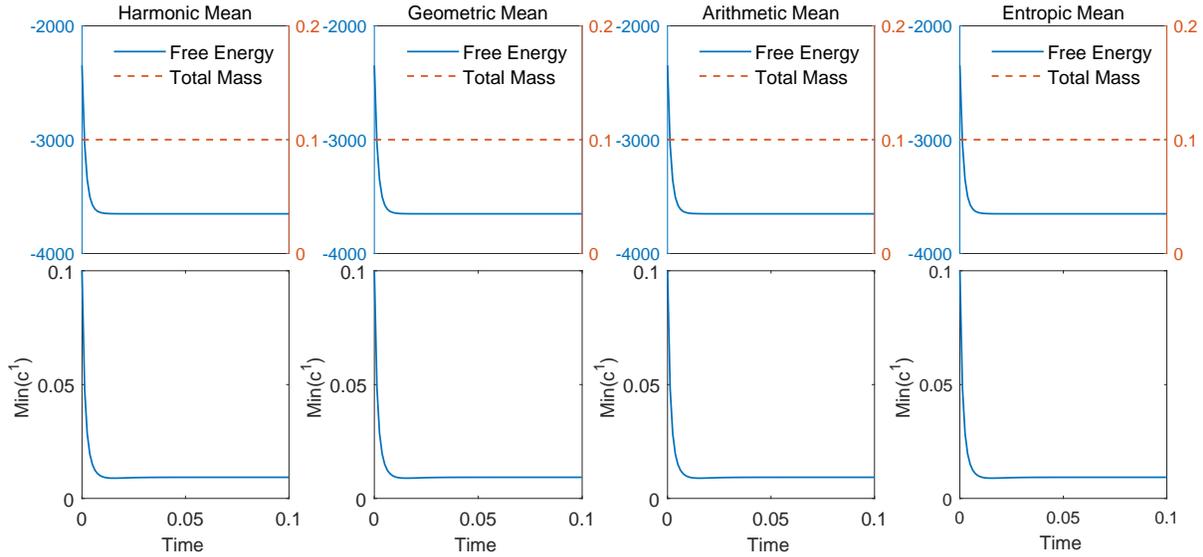}
\caption{The evolution of the discrete energy $F_h$, mass of cations, and minimum concentration of cations with $N=80$ and $\Delta t=h/10$.}
\label{f:MassEnergy}
\end{figure}
In this test, we assess the performance of the Slotboom scheme in preserving physical properties with four different means: the harmonic mean, geometric mean, arithmetic mean, and entropic mean. As displayed in Fig.~\ref{f:MassEnergy}, the Slotboom schemes with four different means perfectly conserve the total ion concentration. The profiles of discrete free energy~\reff{Fh} are shown to decay monotonically and robustly, and numerical solutions of concentrations remain positive for all the time.  Such results are consistent with the theoretical analysis.

\section{Conclusions}  \label{sec:conclusion}
Structure-preserving numerical methods for the Poisson--Nernst--Planck (PNP) equations have attracted lots of attention recently. Based on the Slotboom transformation, a class of numerical methods that can be proved to preserve mass conservation, ionic concentration positivity, and free-energy dissipation at discrete level have been derived in literature. However, rigorous convergence analysis for such structure-preserving schemes has been still open. This work has provided optimal rate convergence analysis for such structure-preserving schemes based on the Slotboom reformulation.  Different options of mobility average at the staggered mesh points have been considered for spatial finite-difference discretization, such as the harmonic mean, geometric mean, arithmetic mean, and entropic mean. A semi-implicit temporal discretization has been employed, therefore only a non-constant coefficient, positive-definite linear system has to be solved at each time step. A higher order asymptotic expansion has been used in the consistency analysis, so that the discrete maximum norm of the concentration variables can be controlled.  The harmonic mean for the mobility average has been taken in the convergence estimate for simplicity, while the desired error estimate for other options of mobility average has been elaborated as well with more technical details.  An optimal rate convergence analysis for the ionic concentrations, electric potential, and fluxes has been established, which is the first such result for the structure-preserving numerical schemes based on the Slotboom reformulation. With such convergence analysis, the conditional energy dissipation analysis that relies on the maximum norm bounds of the concentration and the gradient of the electric potential has been further validated. Numerical results have also been presented to demonstrate the accuracy and structure-preserving performance of the numerical schemes.

We now discuss several possible further refinements of our work. The current convergence analysis could be extended to consider the fully implicit schemes that can unconditionally preserve energy dissipation. The unconditional energy dissipation would help establish upper bounds that are useful in the error estimate.  In addition, it is of practical interest to consider convergence analysis of structure-preserving schemes for \emph{modified} PNP equations based on the Slotboom reformulation.  For modified PNP equations, the corresponding structure-preserving numerical methods can be analogously proposed based on \reff{NPe} with
\[
S^l=q^l \psi + \mu^{l}_{\rm excess},
\]
where the excess chemical potential $\mu^l_{\rm excess}$ can be introduced to consider various effects that are neglected by the classical PNP models. For instance,  the excess chemical potential can describe ionic steric effects either by hard-sphere repulsion described by the fundamental measure theory~\cite{Roth:JPC:2002,Wu:JCP:2002} or by the incorporation of the entropy of solvent molecules~\cite{Li:N:2009, ZhouWangLi_PRE11}. 
\vskip 5mm
\noindent{\bf Acknowledgements.}
This work is supported in part by the Natural Science Foundation of Jiangsu Province BK20210443, High level personnel project of Jiangsu Province 1142024031211190, National Natural Science Foundation of China 12101264 (J. Ding), 21773165, Natural Science Foundation of Jiangsu Province BK20200098, China, and Shanghai Science and Technology Commission 21JC1403700 (S. Zhou), and NSF DMS-2012669 (C. Wang).
 
\appendix
\numberwithin{equation}{section}
\makeatletter
\newcommand{\section@cntformat}{Appendix\thesection:\ }
\makeatother
\section[Appendix]{Appendix A. Proof of Theorem \ref{theorem:energy}}\label{Ap:A}
For completeness, we present the following proof which is based on the proof of Theorem 3.4 in the work~\cite{LiuMaimaiti2021}, with  modifications due to the entropic mean under consideration. 
\begin{proof}
By~\reff{Fh},  we have
\begin{equation}
\begin{aligned}
F_h^{n+1}-F_h^n=&\sum_{l=1}^M\ciptwo{c^{l,n+1}}{\log c^{l,n+1}}-\ciptwo{c^{l,n}}{\log c^{l,n}}\\
&+\frac{1}{2}\ciptwo{\rho^{n+1}}{\psi^{n+1}}-\frac{1}{2}\ciptwo{\rho^n}{\psi^n}\\
:=&-\Delta t I_1+(\Delta t)^2 I_2,
\end{aligned}
\end{equation}
where
\begin{equation}
\begin{aligned}
I_1&=-\frac{1}{\Delta t}\sum_{l=1}^M\left[\ciptwo{c^{l,n+1}}{\log c^{l,n+1}}-\ciptwo{c^{l,n}}{\log c^{l,n}}+\ciptwo{c^{l,n+1}-c^{l,n}}{q^l\psi^n}\right]\\
I_2&=\frac{1}{2(\Delta t)^2}\left[\ciptwo{\rho^{n+1}}{\psi^{n+1}}-\ciptwo{\rho^{n}}{\psi^{n}}-2\ciptwo{\rho^{n+1}-\rho^n}{\psi^n} \right].\\
\end{aligned}
\end{equation}
Thus, the energy dissipation inequality is satisfied if
\[
\Delta t \leq \tau^*\leq \frac{I_1}{2I_2}.
\]

By the discretization scheme\reff{FuPNP} and periodic boundary conditions, we have
\[
\begin{aligned}
I_1\geq -\sum_{l=1}^M\ciptwo{\frac{c^{l,n+1}-c^{l,n}}{\Delta t}}{\log g^{l,n+1}}
\geq\sum_{l=1}^M\Big\langle e^{-q^l\psi^n}\nabla_h g^{l,n+1}, \nabla_h\log g^{l,n+1} \Big\rangle
\geq 0,
\end{aligned}
\]
where $g^{l,n+1}=c^{l,n+1}e^{q^l\psi^n}$, the summation by parts, and $(\log X-\log Y)(X-Y)>0$ for $X, Y>0$ have been used.
In addition, one can easily verify that
\[
\ciptwo{\rho^{n+1}}{\psi^n}=\ciptwo{\rho^n}{\psi^{n+1}},
\]
which further implies that
\[
I_2=\frac{1}{2(\Delta t)^2}\ciptwo{\rho^{n+1}-\rho^n}{ \psi^{n+1}-\psi^n}.
\]
By summation by parts, one can show by the Cauchy inequality that
\begin{equation}\label{Apc:1}
\begin{aligned}
I_2=&-\sum_{l=1}^M\frac{q^l}{2\Delta t}\Big\langle e^{-q^l\psi^n}\nabla_h g^{l,n+1}, \nabla_h(\psi^{n+1}-\psi^n) \Big\rangle\\
\leq&\sum_{l=1}^M\frac{|q^l|}{2\Delta t}\big\| e^{-q^l\psi^n}\nabla_h g^{l,n+1}\big\|_2\big\|\nabla_h(\psi^{n+1}-\psi^n) \big\|_2.
\end{aligned}
\end{equation}

On the other hand, it follows from the discrete Poisson's equation~\reff{DPssn} that
\begin{equation}\label{Apc:2}
\begin{aligned}
I_2=\frac{\kappa}{2(\Delta t)^2}\ciptwo{\Delta_h\psi^n-\Delta_h\psi^{n+1}}{\psi^{n+1}-\psi^n }
=\frac{\kappa}{2(\Delta t)^2}\big\|\nabla_h(\psi^{n+1}-\psi^n) \big\|_2^2.
\end{aligned}
\end{equation}

Combination of \reff{Apc:1} and \reff{Apc:2} yields
\[
\begin{aligned}
\big\|\nabla_h(\psi^{n+1}-\psi^n) \big\|_2^2\leq &\frac{\Delta t^2}{\kappa^2}\left(\sum_{l=1}^M |q^l| \big\| e^{-q^l\psi^n}\nabla_h g^{l,n+1}\big\|_2\right)^2\\
\leq&\frac{\Delta t^2}{\kappa^2}\sum_{l=1}^M |q^l|^2 \sum_{l=1}^M\big\| e^{-q^l\psi^n}\nabla_h g^{l,n+1}\big\|_2^2.\\
\end{aligned}
\]
Thus, we have
\[
I_2\leq C_0\sum_{l=1}^M\big\| e^{-q^l\psi^n}\nabla_h g^{l,n+1}\big\|_2^2,\\
\]
where $C_0=\frac{\sum_{l=1}^M (q^l)^2}{2\kappa}$. Therefore,
\[
\begin{aligned}
\frac{I_1}{2I_2}
\geq& \frac{\sum_{l=1}^M\Big\langle e^{-q^l\psi^n}\nabla_h g^{l,n+1}, \nabla_h\log g^{l,n+1}\Big\rangle}{2C_0\sum_{l=1}^M\big\| e^{-q^l\psi^n}\nabla_h g^{l,n+1}\big\|_2^2}\\
\geq&\frac{1}{2C_0}\min_{l,i,j,k}\left\{\frac{\log g^{l,n+1}_{i+1,j,k} -\log g^{l,n+1}_{i,j,k}}{e^{-q^l\psi^n_{i+\frac{1}{2},j,k}}(g^{l,n+1}_{i+1,j,k}-g^{l,n+1}_{i,j,k})}, \quad\frac{\log g^{l,n+1}_{i,j+1,k}-\log g^{l,n+1}_{i,j,k}}{e^{-q^l\psi^n_{i,j+\frac{1}{2},k}}(g^{l,n+1}_{i,j+1,k}-g^{l,n+1}_{i,j,k})},\right.\\
&\left.\qquad\qquad\quad\frac{\log g^{l,n+1}_{i,j,k+1}-\log g^{l,n+1}_{i,j,k}}{e^{-q^l\psi^n_{i,j,k+\frac{1}{2}}}(g^{l,n+1}_{i,j,k+1}-g^{l,n+1}_{i,j,k})} \right\}\\
\geq&\frac{1}{2C_0}\min_{l,i,j,k}\left\{\frac{1}{(g^{l,n+1}_{i+1,j,k})^{\theta}(g^{l,n+1}_{i,j,k})^{(1-\theta)}e^{-q^l\psi^n_{i+\frac{1}{2},j,k}}},
\quad\frac{1}{(g^{l,n+1}_{i,j+1,k})^{\theta}(g^{l,n+1}_{i,j,k})^{(1-\theta)}e^{-q^l\psi^n_{i,j+\frac{1}{2},k}}}, \right.\\
&\left.\qquad\qquad\quad\frac{1}{(g^{l,n+1}_{i,j,k+1})^{\theta}(g^{l,n+1}_{i,j,k})^{(1-\theta)}e^{-q^l\psi^n_{i,j,k+\frac{1}{2}}}} \right\},
\end{aligned}
\]
where 
\[
\frac{X-Y}{e^X-e^Y}=\frac{1}{e^{\theta X+(1-\theta)Y}}~~\mbox{for}~~\theta\in(0,1)
\]
has been used in the second inequality.
Using the entropic mean in the $x$ direction
$$e^{-q^l\psi^n_{i+\frac{1}{2},j,k}}=\frac{q^l(\psi^n_{i+1,j,k}-\psi^n_{i,j,k})}{e^{q^l\psi^n_{i+1,j,k}}-e^{q^l\psi^n_{i,j,k}}},$$
we obtain
\[
\begin{aligned}
\frac{1}{(g^{l,n+1}_{i+1,j,k})^{\theta}(g^{l,n+1}_{i,j,k})^{(1-\theta)}e^{-q^l\psi^n_{i+\frac{1}{2},j,k}}}
=&\frac{e^{q^l\psi^n_{i+1,j,k}}-e^{q^l\psi^n_{i,j,k}}}{(c^{l,n+1}_{i,j,k})^{1-\theta}(c^{l,n+1}_{i+1,j,k})^{\theta}e^{q^l\theta\psi^n_{i+1,j,k}}e^{q^l(1-\theta)\psi^n_{i,j,k}}q^l(\psi^n_{i+1,j,k}-\psi^n_{i,j,k})}\\
=&\frac{e^{q^l(1-\theta)(\psi^n_{i+1,j,k}-\psi^n_{i,j,k})}-e^{q^l\theta(\psi^n_{i,j,k}-\psi^n_{i+1,j,k})}}{q^l(\psi^n_{i+1,j,k}-\psi^n_{i,j,k})(c^{l,n+1}_{i,j,k})^{1-\theta}(c^{l,n+1}_{i+1,jk})^{\theta}}\\
\geq&\frac{e^{-|q^l(\psi^n_{i+1,j,k}-\psi^n_{i,j,k}) |}}{M_1}\\
\geq&\frac{e^{-h |q^l| \|\nabla_h\psi^n\|_{\infty}}}{M_1},\\
\end{aligned}
\]
where $M_1=\underset{1\leq l\leq M}{\max}\|c^{l,n+1} \|_{\infty}$. Similarly, we have
\begin{equation}
\begin{aligned}
&\frac{1}{(g^{l,n+1}_{i,j+1,k})^{\theta}(g^{l,n+1}_{i,j,k})^{1-\theta}e^{-q^l\psi^n_{i,j+\frac{1}{2},k}}}\geq\frac{e^{-|q^l|h\|\nabla_h\psi^n\|_{\infty}}}{M_1},\\
&\frac{1}{(g^{l,n+1}_{i,j,k+1})^{\theta}(g^{l,n+1}_{i,j,k})^{1-\theta}e^{-q^l\psi^n_{i,j,k+\frac{1}{2}}}}\geq\frac{e^{-|q^l|h\|\nabla_h\psi^n\|_{\infty}}}{M_1}.
\end{aligned}
\end{equation}
Therefore, we obtain
\[
\frac{I_1}{2I_2}\geq\frac{e^{-|q^l|h\|\nabla_h\psi^n\|_{\infty}}}{2C_0M_1}.
\]
This completes the proof.
\qed

\end{proof}

\bibliographystyle{plain}
\bibliography{PNP}

\end{document}